\documentclass[a4]{article}

\usepackage{amsmath,amsfonts,amsthm,amssymb,graphicx,tikz-cd,stmaryrd}
\usepackage{algorithmic}
\usepackage{algorithm}
\usepackage{fancybox}
\usepackage{appendix}

\theoremstyle{definition}
\newtheorem{thm}{Theorem}[section]
\newtheorem{deff}[thm]{Definition}
\newtheorem{lemm}[thm]{Lemma}
\newtheorem{prop}[thm]{Proposition}
\newtheorem{cor}[thm]{Corollary}
\newtheorem{rem}[thm]{Remark}

\newcommand{\zahl}{\mathbb{Z}}

\newcommand{\rat}{\mathbb{Q}}

\newcommand{\card}{\# }

\newcommand{\ratmap}{\dashrightarrow}

\newcommand{\pone}{\mathbb{P}^1}
\newcommand{\orbmult}{\lambda^\circ}
\newcommand{\ptmult}{\lambda^\bullet}
\newcommand{\proj}{\mathbb{P}}

\newcommand{\fct}[1]{\operatorname{#1}}

\newcommand{\per}{\fct{Per}}

\newcommand{\dyn}{\fct{Dyn}}
\newcommand{\ccor}{\fct{Corr}}

\newcommand{\slt}{\fct{SL}_2}
\newcommand{\pglt}{\fct{PGL}_2}

\title{Dynamical Systems of Correspondences on the Projective Line II: Degrees of Multiplier Maps}
\author{Rin Gotou}
\date{}
\author{Rin Gotou\thanks{Department of Mathematics, Graduate School of Osaka University \texttt{u661233h@ecs.osaka-u.ac.jp}}}

\begin{document}
\maketitle
\begin{abstract} This paper is a sequel of \cite{gotou2023dyn_corr_moduli}. In this paper, we consider a kind of inverse problem of multipliers. The problem is to count number of isospectral correspondences, correspondences which has the same combination of multipliers. We give a primitive explicit upper bound. In particular, for a generic rational map of degree $d$, there are at most $O(d^{10d})$ rational maps with the same combination of multipliers for the fixed points and the 3-periodic points. This paper also includes two proofs of a correction \cite{hutz-tepper2013errata}, which states that the multipliers of the fixed and 2-periodic points determines generic cubic morphism uniquely. One is done by proceeding the computation in \cite{hutz-tepper2013multiplier}. The other is done by more explicit computation with the help of invariant theory given in \cite{west2015moduli}.

\end{abstract}

\section{Introduction}
This paper is a sequel of \cite{gotou2023dyn_corr_moduli}. In \cite{gotou2023dyn_corr_moduli}, the author defined the moduli spaces $\dyn_{d,e}$ of dynamical systems of self-correspondences on the projective line $\pone$. The moduli space $\dyn_{d,e}$ parametrizes self-correspondences $C \subset \pone \times \pone$ of degree $(d,e)$ with only mild (i.e. of multiplicity $\leq \frac{d+e}{2}$) singularities on the diagonal of $\pone \times \pone$ (\cite[Theorem 1.1]{gotou2023dyn_corr_moduli}), up to conjugation by $\fct{Aut}(\pone ) = \pglt$. As essential structures of the moduli space of dynamical system, iteration maps $\Psi_n : \dyn_{d,e} \ratmap \dyn_{d^n,e^n}$ and fixed point multiplier map $\lambda_{1,(d,e)} : \dyn_{d,e} \ratmap \proj^{d+e}$ were introduced. These respectively indicates $n$-th iteration $C \mapsto C \circ \cdots \circ C$ and the fixed point multipliers
\[C : (f = 0) \mapsto \left\{ \left. \lambda_{z}(f) := - \frac{\partial_y f(z,z)}{\partial_x f(z,z)} \right| z\in \pone : f(z,z) = 0 \right\} \]
(under the isomorphism $\{ (d+e) \text{ (possibly multiple) points in }\pone \} = \fct{Sym}_{d+e} \pone \simeq \proj^{d+e}$).
Multiplier maps $\lambda_{n,(d,e)} := \lambda_{1,(d^n , e^n)} \circ \Psi_n$ were also introduced, but well-definedness of $\lambda_{n,(d,e)}$ for general $(n,d,e)$ are remaining as a problem (\cite[Problem 1.9, Remark 7.2]{gotou2023dyn_corr_moduli}). If the $n$-th multiplier map $\lambda_{n,(d,e)}$ is well-defined, then it indicates
\[ C : (f = 0 ) \mapsto \left\{ \left. \lambda_{(z_i )}(f) := \prod_{i = 0}^{n-1} \left( - \frac{\partial_y f(z_i,z_{i+1})}{\partial_x f(z_i,z_{i+1})} \right) \right| \genfrac{}{}{0pt}{}{ (z_i)_{i = 0}^{n} \in (\pone)^{n+1}: }{f(z_i ,z_{i+1} ) = 0, z_0 = z_n} \right\}. \]

A purpose to define morphisms $\lambda_n := \lambda_{n,(d,e)}$ is to consider inverse problem of multiplier, that is, how extant information of multipliers determines morphisms. The results about inverse problem of multiplier are rephrased as the properties of multiplier maps to their images. Let 
\begin{align*}
    \Lambda_n & := \prod_{m : m | n} \lambda_m : \dyn_{d,e} \ratmap \prod_{m : m | n} \proj^{d^m + e^m}  \text{ and } \\
    \Lambda & := \prod_{m \geq 1} \lambda_m : \dyn_{d,e} \ratmap \prod_{m \geq 1} \proj^{d^m + e^m},
\end{align*}
where $m | n$ means that $m$ divides $n$. Let $\fct{Poly}_d \subset \dyn_{1,d}$ be the locus of the points indicates a conjugation class including some polynomial morphisms. Table \ref{tab:multresult} is a brief review of known results about inverse problem of multipliers, as the form of degrees of multiplier maps to their images. All known results are about cases of usual morphisms, that is, the degree as self-correspondence is $(1,d)$.
\begin{table}[h]
    \centering
    \begin{tabular}{l|l|l|l|l}
     Reference & locus $X$ & degree of & map $F$ & degree of \\
     & & morphisms & & $F : X \ratmap F(X)$ \\
     \hline 
     \cite{mcmullen1993} & $\fct{Dyn}_{1,d}$ & $d \geq 2$ & $\Lambda$ & $< \infty$ \\
     \cite{gorbovickis2015rat_mult_alg_indep} &  &  & $\Lambda_n\ (n \geq 3)$ & $ < \infty$ \\
     \cite{schmitt2017cptf_stmaps_prep} & & & & $<$ Recursive formula\\
     \cite[Conjecture]{gorbovickis2015rat_mult_alg_indep} & & & $\Lambda_2$ & ? \\
     \cite{jixie2023multinj} & & $d \geq 4$ &  $\Lambda$  & 1 \\
     \cite{milnor1993lei},\cite{silverman1996p1moduli} & & $d = 2$ & $\Lambda_1 (= \lambda_1)$ & 1 \\ 
     (From dimension) & & $d \geq 3$ & $\Lambda_1$ & $\infty$ \\
     \cite{hutz-tepper2013multiplier} & & $d = 3$ & $\Lambda_2$ & $a_{3,2}$ \\
     \cite{fujimura2006polyn_moduli} & $\fct{Poly}_d$ & $d \geq 2$ & $\Lambda_1$ & $(d-2)!$ \\
     \cite{hutz-tepper2013multiplier} & & $d = 4,5$ &  $\Lambda_2$ & $1$ \\
    \end{tabular}
    \caption{ Results about degree of multiplier maps onto their images}
    \label{tab:multresult}
\end{table}

\begin{rem}
There are some more precise results about the degrees for the loci which the degrees of multiplier maps changes from generic behaviour (\cite{mcmullen1993}, \cite{silverman2007arithmetic}, \cite{fujimura2006polyn_moduli}, \cite{fujimura2007rat_moduli}, \cite{sugiyama2017polyn_fxd_pt_mult}, \cite{sugiyama2020moduli}). 
\end{rem}

In \cite{hutz-tepper2013multiplier}, $a_{3,2}= 12$ was stated, but the author corrected it to $a_{3,2} = 1$ (\cite{hutz-tepper2013errata}). An aim of this paper is to give precise proofs of correction.
\begin{thm}[Theorem \ref{cubbirat}]\label{maincubbir} The rational map  
\[ \Lambda_{2,(1,3)} = \lambda_{1,(1,3)} \times \lambda_{2,(1,3)} : \dyn_{1,3} \ratmap \proj^3 \times \proj^9
\] is birational to its image.
\end{thm}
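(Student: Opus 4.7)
\emph{Setup.} The plan is to prove birationality of $\Lambda_{2,(1,3)}$ onto its image by showing that the degree of this generically finite map equals $1$. Since $\dyn_{1,3}$ has dimension $2 \cdot 3 - 2 = 4$ and $\Lambda_{2,(1,3)}$ is already known to be generically finite onto its image by \cite{hutz-tepper2013multiplier} (whose only error lay in the numerical value of the degree), the image is again $4$-dimensional; birationality thus reduces to showing that the generic fiber is a single $\pglt$-conjugacy class. As signalled in the abstract, I plan two independent proofs.

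\emph{First approach (continuing Hutz--Tepper).} I would fix a normal form for cubic rational maps by exhausting the $\pglt$-action---the cleanest choice being to place three of the four fixed points at $0, 1, \infty$, leaving a $4$-parameter family covering a dense open subset of $\dyn_{1,3}$. The fixed-point and $2$-periodic multipliers are then explicit algebraic functions of these parameters, and the condition that they attain prescribed values becomes a polynomial system whose generic solution count equals the degree of $\Lambda_{2,(1,3)}$. The resultant computation in \cite{hutz-tepper2013multiplier} returned degree $12$; I plan to retrace it carefully, identifying the spurious factor (most plausibly from a labelling ambiguity in choosing which three fixed points to normalize at $0, 1, \infty$, or from a common polynomial factor that should have been divided out) and thereby confirming the true degree is $1$.

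\emph{Second approach (via West's invariants).} The explicit $\pglt$-invariants on $\dyn_{1,3}$ constructed in \cite{west2015moduli} furnish a small, low-degree coordinate system on a dense open subset of the moduli space. In these coordinates $\Lambda_{2,(1,3)}$ becomes an explicit polynomial map between affine varieties of the same dimension, so birationality reduces to exhibiting a rational inverse: expressing each of West's generators as a rational function in the elementary symmetric polynomials of the fixed and $2$-periodic multipliers. The moderate degree of West's generators puts this inversion within reach of direct symbolic computation, whereas the fixed-point normal form of the first approach produces parameters of much higher degree.

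\emph{Main obstacle.} The principal difficulty in either approach is the algebraic complexity of the $2$-periodic multipliers. The proper $2$-periodic points of a generic degree-$3$ map $f$ are cut out, after dividing $f(f(x)) - x$ by $f(x) - x$, by a polynomial of degree $6$, giving three $2$-cycles whose multipliers $f'(a)f'(b)$ are symmetric polynomials of high degree in the chosen coordinates. West's invariants substantially mitigate this complexity, which is why the second approach is expected to be cleaner, while the first has the didactic merit of locating the precise origin of the erroneous factor of $12$ in \cite{hutz-tepper2013multiplier}. Running the two computations in parallel also provides a mutual cross-check, which is especially valuable given the history of errata on this exact degree.
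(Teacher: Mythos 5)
Your outline correctly identifies the two general routes the paper takes (continuing the Hutz--Tepper computation, and working in West's invariant ring), but in both cases the mechanism you propose is not the one that actually closes the argument, and as written each branch has a gap that would stop you.

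For the first approach: there is no ``spurious factor'' to be divided out of the Hutz--Tepper resultant, and no relabelling ambiguity of normalized fixed points to blame. The $12$ reduced solutions $(a,b)$ of \eqref{sndper}--\eqref{notfixed} are genuine. The corrections are of a different nature: (a) each $2$-cycle contributes \emph{two} solutions, since $(a,\phi_{P,a}(b))$ solves the system whenever $(a,b)$ does, so the $12$ points correspond to only $6$ maps; and (b) the system \eqref{sndper}--\eqref{sndmult} prescribes only \emph{one} of the three $2$-cycle multipliers of a cubic map, so the fiber of the full map $\Lambda_{2,(1,3)}$ is cut down further by matching the remaining two. The paper finishes by computing, for each of the $6$ maps, the other two period-$2$ multipliers over a finite field and checking they are pairwise distinct; only then does the degree drop to $1$. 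Your plan, which looks for an erroneous common factor in the elimination, would not find one and would stall at (or before) the count of $6$.

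For the second approach: the paper does not exhibit a rational inverse, and you should not expect to produce one by direct symbolic inversion --- the second-multiplier data enters only through degree-$12$ invariants $\delta_i$ obtained as coefficients of a square root $\delta(t)$ of $L_2(t)/\Sigma_-^4$ (with a sign to be fixed), and these have on the order of a thousand terms. Instead the paper builds a graded subalgebra $B_2=k[\sigma_i,\phi,\psi]\subset A$ with $KP(B_2)=K(\Lambda_2(\dyn_{1,3}))$, using two unexpectedly factorizable linear combinations of $\delta_2,\delta_3$, and then applies the volume bound of Proposition \ref{Voldeg}: comparing Hilbert series of $A^{[6]}$ and $B_2^{[6]}$ gives $\deg\Lambda_2\le \fct{Vol}(A^{[6]})/\fct{Vol}(B_2^{[6]})=9/5<2$, forcing degree $1$. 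The decisive, non-obvious input is that the single relation among $\sigma_i,\phi,\psi$ has degree $60$ rather than the expected $108$; without that, the volume quotient would not fall below $2$. Neither of these key ideas appears in your proposal, so while your high-level framing (generic finiteness plus degree $=1$) is right, the proof as planned is not yet there.
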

In Section \ref{sc:cubbirat}, we give two proofs of this theorem. In Subsection \ref{ss_ffr}, we prove Theorem \ref{cubbirat} by proceeding the computation done in \cite{hutz-tepper2013multiplier}. This proof is the proof mentioned in \cite{hutz-tepper2013errata} and independent from other parts (except the programs in Subsection \ref{ss_ffr_pr} used for the proof) of this paper.

The other proof in Subsection \ref{ss_dc} is by a direct computation on the invariant ring given in \cite{west2015moduli}, which is the coordinate ring of $\dyn_{1,3}$. The computation is done by an interpolation method in Subsection \ref{ss_intp_alg}, and some unexpectedly simple relations among the coordinate functions of (Remark \ref{simprel}). A merit of this method is that the part without the direct computation of structure of the coordinate rings can be used for more general cases of $\Lambda_{n,(d,e)}$. The other aim of this paper is to give a primitive estimation of degree of multiplier map of correspondence along this method. This gives very rough as an upper bound, but gives a finite number.
\begin{thm}
Let $p$ be a prime number. If the $p$-th multiplier map is well-defined and
\[\Lambda_p := \lambda_{1,(d,e)} \times \lambda_{p,(d,e)} : \dyn_{d,e} \ratmap \Lambda_p (\dyn_{d,e}) \subset \proj (D_{d+e}) \times \proj (D_{M}) \]
is generically finite to its image, then its degree is at most \[
     \frac{\gcd(d+e,2) N^{de+d+e-3} (d+e-3)! (de-3)! }{2(d+e) \cdot (de+d+e-3)!},
\]
where
\[ N := 2(d+e-1) + \frac{2( (d^p-1)(d^p-d)-(e^p-1)(e^p-e))}{p(d-e)}. \]
\end{thm}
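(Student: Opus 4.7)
The plan is to bound the degree of $\Lambda_p$ by a Bézout-type intersection count on the parameter space of bidegree $(d,e)$ correspondences, following the interpolation framework of Subsection~\ref{ss_intp_alg}. The three ingredients are: the dimension of $\dyn_{d,e}$, a uniform degree bound $N$ on the defining polynomials of the multipliers in the coefficients of a bidegree $(d,e)$ defining polynomial $f$, and a combinatorial correction for descending through the $\pglt$-quotient. I view $\dyn_{d,e}$ as the GIT quotient of an open subset of $\proj^{(d+1)(e+1)-1}$ (parametrizing bihomogeneous polynomials of bidegree $(d,e)$) by the diagonal $\pglt$-action, which gives $\dim \dyn_{d,e} = de + d + e - 3$, matching the exponent of $N$ in the final bound.

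The central calculation is the bound $N$. The fixed-point summand $2(d+e-1)$ arises from bounding the degree of each symmetric function of $\lambda_z(f) = -\partial_y f(z,z)/\partial_x f(z,z)$, with $z$ running over the $d+e$ fixed points, as a polynomial in the coefficients of $f$; this degree can be read off from the resultant $\fct{Res}_z(f(z,z),\, T\, \partial_x f(z,z) + \partial_y f(z,z))$ after monicization in $T$. The $p$-periodic summand is obtained analogously from the iterate correspondence of bidegree $(d^p,e^p)$ given by $\Psi_p$; applying Möbius inversion to the Lefschetz-type periodic-point count to isolate the contribution of primitive $p$-cycles, and translating the resulting count into a degree bound on each multiplier polynomial, produces the expression $\frac{2((d^p-1)(d^p-d)-(e^p-1)(e^p-e))}{p(d-e)}$. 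Summing these two contributions gives the stated $N$.

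With $N$ in hand, Bézout's theorem on $\proj^{(d+1)(e+1)-1}$ gives the preliminary bound $N^{de+d+e-3}$ on the number of intersection points of the parameter space with the $\dim \dyn_{d,e}$ generic hyperplane conditions cut out by the multiplier coordinates. Descending to the quotient $\dyn_{d,e}$, one divides by the generic $\pglt$-orbit size, encoded as $2(d+e)/\gcd(d+e,2)$, where $\gcd(d+e,2)$ records a potential $\mathbb{Z}/2$-stabilizer of the multiplier divisor (reflecting the symmetry $f(x,y) \leftrightarrow f(y,x)$, which preserves the multipliers when $d+e$ is even), and by the degree of the image of $\dyn_{d,e}$ in its ambient embedding, which contributes the combinatorial factor $(d+e-3)!(de-3)!/(de+d+e-3)!$. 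Multiplying these corrections yields the stated bound.

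The main obstacle is the sharp derivation of $N$, especially its $p$-periodic summand. The resultant-based bound for the fixed-point part is essentially standard, but controlling the degree of each $p$-periodic multiplier polynomial requires explicit information about the iterate $\Psi_p$, including how primitive $p$-cycles decompose among the $d^p+e^p$ formally $p$-periodic points. This is precisely where the hypothesis that $\lambda_{p,(d,e)}$ be well-defined becomes crucial, since it ensures that $\Psi_p$ is regular at the generic point of $\dyn_{d,e}$ and makes the Möbius-type count of primitive $p$-cycles unambiguous. Once $N$ is established, the Bézout estimate together with the $\pglt$-quotient bookkeeping is routine.
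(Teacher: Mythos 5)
Your outline reverse-engineers the shape of the formula, but two of its three correction factors are attributed to mechanisms that do not exist, and the underlying Bézout count does not go through as described. First, intersecting the coefficient space $\proj^{(d+1)(e+1)-1}$ with only $\dim\dyn_{d,e}=de+d+e-3$ hypersurfaces of degree $N$ does not produce a zero-dimensional scheme: every fiber of $\Lambda_p$ contains $3$-dimensional $\pglt$-orbits, so there is no finite point count to divide by an ``orbit size,'' and a generic $\pglt$-orbit is not a finite set of cardinality $2(d+e)/\gcd(d+e,2)$ in any case. In the paper that factor is not group-theoretic at all: it is an upper bound for $\fct{Vol}(I(V_d\otimes V_e))$, the leading Laurent coefficient at $t=1$ of the Hilbert series of the invariant ring, obtained from the Schur-polynomial formula of \cite{dc-h-h-s2020hilbert} (Theorem \ref{chhs}) together with the semistandard-tableau estimate of Lemma \ref{schureval}. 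This is the substantive input that replaces your ``divide by the orbit'' step, and the whole argument is run as a comparison of volumes of graded algebras (Proposition \ref{Voldeg}: $[KP(A):KP(B)]\leq \fct{Vol}(A)/\fct{Vol}(B)$), i.e.\ Bézout is done on the quotient via Hilbert series, not on the ambient space.

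Second, the factor $(d+e-3)!\,(de-3)!/(de+d+e-3)!$ is not the degree of an embedding of $\dyn_{d,e}$ (the source); it is $\fct{Vol}(B)^{-1}N^{-(de+d+e-3)}$ for the subalgebra $B=k[f_ig_j]$ generated by products of $d+e$ elements $f_i$ of the fixed-point linear system $L_1$ with $de-2$ elements $g_j$ of the $p$-periodic system $L_p$. Producing this factor requires knowing that one may choose exactly $d+e$ algebraically independent functions from $L_1$ and $de-2$ from $L_p$ — this split is the point where the algebraic-independence theorem of \cite{gorbovickis2015rat_mult_alg_indep} enters, and it is invisible in your uniform ``generic hyperplane conditions'' picture. (Relatedly, the proof is carried out for $\orbmult_1\times\orbmult_p$ rather than $\lambda_1\times\lambda_p$, using Remark \ref{immults} to identify the images; the division by $p$ inside $N$ comes from passing to orbit multipliers in Proposition \ref{linsys}.) Your derivation of $N$ itself is consistent with Proposition \ref{linsys}, but without the invariant-ring volume bound and the $L_1$/$L_p$ independence split, the remaining factors of the stated bound cannot be obtained.
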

\begin{rem}
 We can give upper bounds in similar method for generic $\Lambda_n$ in the similar assumption, but $N$ becomes slightly more complicated polynomial. We also note that $\Lambda_2$ can be generically finite to its image only if $(d-e)^2 \geq d+e-2$ (Remark \ref{dimsnd} and Remark \ref{immults}).
\end{rem}
Combining with the finiteness result (\cite{gorbovickis2015rat_mult_alg_indep} in Table \ref{tab:multresult}), we can see the following:
\begin{cor}
    For $d \geq 2$, the degree of $\Lambda_{3,(1,d)}$ is at most 
    \[ \frac{\gcd(d+1,2) 2^{2d-2}(d^5+d^4-d^2+2d)^{2d-2} \cdot (d-2)!(d-3)! }{2 \cdot 3^{2d-2} \cdot (d+1) \cdot (2d-1)!}. \]
\end{cor}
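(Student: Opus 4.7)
The plan is to apply the preceding theorem directly, with $p = 3$ and with the role of $(d,e)$ in that theorem played by $(1,d)$, after verifying that its two hypotheses hold for the moduli space $\dyn_{1,d}$ of degree-$d$ rational morphisms.

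First I would check those hypotheses. Since every point of $\dyn_{1,d}$ is the conjugacy class of an honest morphism $f : \pone \to \pone$ of degree $d$, the iterate $\Psi_3(C)$ is simply the graph of $f^{\circ 3}$, no resolution of a correspondence is necessary, and hence $\lambda_{3,(1,d)}$ reduces to the classical third multiplier map and is well-defined. For generic finiteness of $\Lambda_3 = \lambda_{1,(1,d)} \times \lambda_{3,(1,d)}$ onto its image I would invoke Gorbovickis' result \cite{gorbovickis2015rat_mult_alg_indep} (Table \ref{tab:multresult}), which asserts that $\Lambda_n$ has finite degree to its image for every $n \geq 3$ and every $d \geq 2$.

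Next I would perform the substitution. With $(d,e) = (1,d)$ and $p = 3$ one reads off $d+e = d+1$, $de = d$, $de + d + e - 3 = 2d - 2$, $(d+e-3)! = (d-2)!$, $(de-3)! = (d-3)!$, and $\gcd(d+e,2) = \gcd(d+1,2)$. The quantity $N$ simplifies because the contribution $(1^3-1)(1^3-1)$ to its numerator vanishes; factoring $(d^3-1)(d^3-d) = d(d-1)^2(d+1)(d^2+d+1)$ and dividing by $d-1$ leaves
\[
N = 2d + \tfrac{2}{3}\, d(d-1)(d+1)(d^2+d+1) = \tfrac{2}{3}\bigl(d^5 + d^4 - d^2 + 2d\bigr).
\]
Raising this to the exponent $2d-2$ and inserting everything into the formula of the theorem reproduces the claimed upper bound after trivial rearrangement.

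The only subtlety is the boundary case $d = 2$, where the factor $(d-3)! = (-1)!$ is literally undefined; in that range $\lambda_1$ alone is already birational by Milnor--Silverman, so no substantive content is lost and I would read the corollary as genuinely concerning $d \geq 3$. Beyond this minor bookkeeping point and the algebraic simplification of $N$, there is no real obstacle: the argument is essentially a direct substitution into the main theorem.
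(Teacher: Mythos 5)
Your approach is exactly the one the paper intends: the corollary is stated there with no separate argument beyond ``combine the theorem (with $p=3$ and bidegree $(1,d)$) with the finiteness result of Gorbovickis,'' and your verification of the two hypotheses (well-definedness of $\lambda_{3,(1,d)}$ since every point of $\dyn_{1,d}$ is an honest morphism, and generic finiteness of $\Lambda_3$ from \cite{gorbovickis2015rat_mult_alg_indep}) is the right one. Your simplification $N=\tfrac{2}{3}(d^5+d^4-d^2+2d)$ is also correct, as is your observation that for $d=2$ the factor $(de-3)!=(-1)!$ is undefined and the statement only has content for $d\geq 3$.

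However, the last step is not the ``trivial rearrangement'' you claim. Substituting $(d,e)=(1,d)$ into the theorem gives $de+d+e-3=2d-2$, hence a factor $(de+d+e-3)!=(2d-2)!$ in the denominator, whereas the corollary has $(2d-1)!$. These differ by the factor $2d-1$, so the bound you actually derive is $(2d-1)$ times \emph{larger} than the one you are asked to prove. The discrepancy is not cosmetic: for $d=3$ the paper's numerical value $4369320$ is consistent only with the $(2d-1)!$ denominator (the $(2d-2)!$ version gives roughly $2.18\times 10^{7}$, i.e.\ five times as much). So either the theorem's constant or the corollary's constant carries a slip in the paper, and your argument as written establishes the corollary only up to this factor of $2d-1$; to close the gap you would need to exhibit an additional saving of $2d-1$ (which does not come from the theorem as stated), or else settle for the weaker bound with $(2d-2)!$ in place of $(2d-1)!$.
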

For $d = 3$, this only gives the evaluation $ \deg \Lambda_{3,(1,d)} \leq 4369320$.
\begin{rem}
In \cite{schmitt2017cptf_stmaps_prep}, an algorithm to count the degree of multiplier maps using equivariant Gromov-Witten invariant is given. The author have not completed the evaluation of the order of the recursion formula.
\end{rem}
This paper is organized as follows. In Section 2, we set up notation and terminology. In Section 3, we give an evaluation of extension degree of rational function field by using the Hilbert series. In Section 4, we evaluate the degree of multiplier map using the evaluation in Section 3. In Section 5, we give two proofs of Theorem \ref{maincubbir}. 

\subsection*{Acknowledgments}
I express my deepest appreciation to Seidai Yasuda for providing MAGMA\cite{magma1997} emvironment and proofreading the original version of this paper, which is the latter half of my master thesis consists of Section \ref{sc_volalg} and the $\dyn_{1,d}$ case of Section \ref{sc_degbd}. I am also grateful to Takehiko Yasuda for supervising me and giving useful discussion. I apologize to Benjamin Hutz, Zhuchao Ji and Junyi Xie for the late uploading of the pricise of the correction. I also appliciate to Zhuchao Ji and Junyi Xie for contacted me about \cite{hutz-tepper2013errata}.

The author is granted by Kakenhi DC1(22KJ2090(21J22197)).

\section{Notation and Terminology}

Throughout this paper, we refer \cite{liu2006alg_geom} for the terminology of algebraic geometry.

We fix a field $k$ of characteristic zero. Unless otherwise stated, we consider any scheme as a scheme over $k$. 

For a ring $R$ and a free $R$-module $M$ of finite rank, we denote by $R[M]$ the polynomial ring generated by a basis of $M$ (with suitable identifications between different choices of basis). 
If a group $G$ and a representation $\rho : G \to \fct{Aut}_R (M)$ are also given, we write $R[M]^G$ for the invariant ring.

\section{Volume of Algebra and Rational Field of Projective Variety}\label{sc_volalg}
To evaluate degrees of multiplier maps, we need to evaluate the degree of a rational map to its image. In \cite{gotou2023dyn_corr_moduli}, the moduli space $\dyn_{d,e}$ of dynamiacal systems is given by the projective scheme $\fct{Proj} I(V_d \otimes V_e )$ of the naturally graded invariant ring $I(V_d \otimes V_e) := k[V_d \otimes V_e]^{\slt}$. Here we have a problem that the graded ring is not fully generated by linear terms. Moreover, full generator (secondary invariants) and relations (syzygies) are not known for generic cases (\cite{olive2017gordan}). Moreover, we only have little information about multiplier maps. So we use an evaluation only using Hilbert series. We use a trivial evaluation (Proposition \ref{Voldeg}), maybe well-known for experts.

\begin{deff}(Hilbert-Poincar\'{e} Series) For a graded $k$-algebra $A = \bigoplus_{i=0}^{\infty} A_i$, {\em the hilbert series of }$A$ is the series\[ H_A(t) := \sum_{i=0}^{\infty} \left( \dim_k A_i \right)t^n . \]
\end{deff}
\begin{deff} For a graded $k$-algebra $A = \bigoplus_{i=0}^{\infty} A_i$ of Krull dimension $d$, {\em the volume of }$A$ is defined by
\[ \fct{Vol}(A) :=  \lim_{t \to 1} (1-t)^{d} H_A(t) . \]
\end{deff}
\begin{rem} In \cite{derksen-kemper2015comp_i_t}, the degree of $A$ is used instead of the volume of $A$. We choose the word ``the volume of $A$'' to avoid confusing with the extension degree of algebras.  
\end{rem}
\begin{deff} Let $A = \bigoplus_{i=0}^{\infty} A_i$ be a graded $k$-algebra.
\begin{enumerate} \item the algebra $A$ is {\em saturated} if $\dim A_i \neq 0$ for any sufficiently large $i$.
\item {\em The saturator} of $A$ is the minimal positive integer $n$ such that the algebra $A^{[n]} := \bigoplus_{i=0}^{\infty }A_{ni}$ is satulated by the grading which $A_{ni}$ is degree $i$.
\end{enumerate}
\end{deff}

\begin{prop}\label{dimvol} Let $A = \bigoplus_{i=0}^{\infty} A_i $ be a finitely generated saturated graded $k$-algebra of Krull dimension $d$ which is an integral domain. Then we have
\[ \dim_k A_i = \frac{\fct{Vol}(A)i^{r-1} }{(r-1)!} + O(i^{r-2}) \ (i \to \infty ). \]
\end{prop}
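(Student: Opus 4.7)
The plan is to combine the Hilbert--Serre theorem with a monotonicity argument that exploits the integral domain hypothesis. By Hilbert--Serre, $H_A(t)$ is a rational function whose denominator is a product of factors $1-t^{b_i}$, so extracting Taylor coefficients from its partial fraction decomposition yields, for $i$ sufficiently large,
\[ \dim_k A_i \;=\; \sum_{j=0}^{r-1} c_j(i \bmod p)\, i^j \]
for some integer $p \geq 1$ and periodic functions $c_0, \ldots, c_{r-1}: \mathbb{Z}/p\mathbb{Z} \to \mathbb{Q}$. The goal is then to show that the leading coefficient $c_{r-1}$ is the constant function with value $\fct{Vol}(A)/(r-1)!$.

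To establish constancy, I would exploit the integral domain property: multiplication by any nonzero homogeneous $a \in A_k$ gives an injection $A_i \hookrightarrow A_{i+k}$, hence $\dim_k A_i \leq \dim_k A_{i+k}$. Saturatedness ensures that every residue class modulo $p$ contains arbitrarily large $k$ with $A_k \neq 0$. For any two residues $j_1, j_2$, picking $k \equiv j_2 - j_1 \pmod p$ with $A_k \neq 0$ and letting $i \equiv j_1 \pmod p$ tend to infinity in the injection inequality, after dividing through by $i^{r-1}$, yields $c_{r-1}(j_1) \leq c_{r-1}(j_2)$; by symmetry, equality holds. To pin down the constant $c$, I would compute $\lim_{t \to 1^-}(1-t)^r H_A(t)$ from the quasi-polynomial expansion, using the elementary asymptotic $\sum_{i \geq 0} i^{r-1} t^i \sim (r-1)!/(1-t)^r$ and noting that the lower-order terms in $i$ contribute poles at $t=1$ of order strictly less than $r$, hence vanish after multiplication by $(1-t)^r$. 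This yields $\fct{Vol}(A) = c(r-1)!$, so $c = \fct{Vol}(A)/(r-1)!$ as desired.

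The hard part will be the constancy step. Hilbert--Serre alone permits $c_{r-1}$ to depend on the residue class modulo $p$ (corresponding to poles of $H_A(t)$ at roots of unity $\zeta \neq 1$ of order $r$), so the argument genuinely needs both hypotheses -- the integral domain structure to provide the injection from multiplication, and saturatedness to provide nonzero multipliers in every residue class -- to force such contributions to cancel. Once constancy is in hand, the identification of $c$ is a routine Abelian limit.
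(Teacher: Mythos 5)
Your argument is correct. It uses the same two hypotheses in the same essential way as the paper (the domain structure to get regular homogeneous elements, saturatedness to get them in the right degrees), but executes the key step on the coefficient side rather than the generating-function side. The paper takes two regular elements $f_m, f_n$ of coprime degrees and uses the identity relating $H_A(t)$ to $H_{A/f_mA}(t)$ (a series with denominator of only $d-1$ cyclotomic-type factors, since $A/f_mA$ has Krull dimension $d-1$) to conclude that every pole of $H_A(t)$ at a root of unity $\zeta\neq 1$ has order at most $d-1$; the asymptotic then falls out of the partial fraction decomposition. You instead pass to the eventual quasi-polynomial $\dim_k A_i=\sum_j c_j(i\bmod p)i^j$ and kill the putative period in the top coefficient by the monotonicity $\dim_k A_i\le\dim_k A_{i+k}$ coming from multiplication by a nonzero element of $A_k$, with $k$ ranging over all residues mod $p$; this is exactly the coefficient-level shadow of the paper's statement that there are no order-$r$ poles away from $t=1$. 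Your route is slightly more robust in that it never needs to track how the numerator of the Hilbert--Serre expression interacts with the denominator (and it sidesteps a sign-of-the-factor slip in the paper's displayed identity, which should read $H_{A/f_mA}(t)=(1-t^m)H_A(t)$ rather than the reverse); the paper's route is shorter once the pole-order bound is accepted. Your identification of the constant via the Abelian limit $\lim_{t\to1^-}(1-t)^rH_A(t)=c\,(r-1)!$ is the routine part, as you say, and the bounded periodic lower-order coefficients indeed give the $O(i^{r-2})$ error term.
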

\begin{proof}
From the proof of \cite[Proposition 1.4.5]{derksen-kemper2015comp_i_t}, it follows that if $B$ is a finitely generated graded algebra, the Hilbert series $H_{B}(t)$ has the form
\begin{equation} H_B(t) = \frac{P_B(t)}{\prod_{i=1}^d (1-t^{a_i})} \label{hilbB} \end{equation}
where $P_B(t)$ is a polynomial, $d$ is the Krull dimension of $B$ and $a_i$'s are positive integers.

Let us take nonzero elements $f_m \in A_m$ and $f_n \in A_n$ for coprime positive integers $m$ and $n$. We assumed that $A$ is an integral domain, therefore $f_n$ and $f_m$ are both regular element of $A$.
We have 
\begin{equation} H_A(t) = (1-t^m) H_{A/f_mA}(t) = (1-t^n) H_{A/f_nA}(t) \label{fmfn} \end{equation} for the natural gradings on $A/f_mA$ and $A/f_nA$. By writing (\ref{fmfn}) in the form of (\ref{hilbB}), we obtain that the order of the pole of $H_A (t)$ other than $1$ is at most $d-1$. Thus the Hilbert function $H_A(t)$ has the partial fraction decomposition of the form 
\[ H_A(t) = \frac{\fct{Vol}(A)}{(1-t)^{d}} + \sum_{i=1}^{d-1} \sum_{\zeta \in \mu_{\infty } } \frac{n_{\zeta, i} }{(1-\zeta t)^{i}} , \]
where $\mu_{\infty}$ is the set of all roots of unity and $n_{\zeta , i} = 0$ except for finitely many $\zeta \in \mu_{\infty}$.
The Taylor expansion 
\[ \frac{1}{(1-t)^e} = \sum_{i = 0}^{\infty} \binom{e+i-1}{e-1} t^n  \]
leads to the assertion of proposition.
\end{proof}
\begin{cor}\label{dvcor}
Let $A = \bigoplus_{i=0}^{\infty} A_i $ be a finitely generated graded $k$-algebra of Krull dimension $d$ which is an integral domain. Let $n$ be the saturator of $A$.
Then we have
\[ \dim_k A_{ni} = \frac{n \fct{Vol}(A)(ni)^{r-1} }{(r-1)!} + O((ni)^{r-2}) \ (i \to \infty ). \]
\end{cor}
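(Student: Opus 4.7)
The plan is to deduce the corollary from Proposition \ref{dimvol} applied to the Veronese subring $A^{[n]} = \bigoplus_{i \geq 0} A_{ni}$, which is saturated by the very definition of the saturator. To use the proposition I first need to verify that $A^{[n]}$ is a finitely generated graded integral domain of Krull dimension $d$. As a graded subring of $A$ it is certainly an integral domain. It is finitely generated because, writing $a_1,\ldots,a_s$ for a set of homogeneous generators of $A$, the monomials $a_1^{e_1}\cdots a_s^{e_s}$ with $0 \leq e_j < n$ and total degree divisible by $n$, together with $a_1^n,\ldots,a_s^n$, generate $A^{[n]}$ as a $k$-algebra. It has Krull dimension $d$ because $A$ is integral over $A^{[n]}$: every homogeneous $a \in A$ satisfies $T^n - a^n = 0$ over $A^{[n]}$, so the inclusion is an integral extension of Noetherian domains.

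The main step will be to relate $\fct{Vol}(A^{[n]})$ and $\fct{Vol}(A)$. The key observation is that the support $S = \{d \geq 0 : A_d \neq 0\}$ is a subsemigroup of $\num$ because $A$ is an integral domain, and that the saturator $n$ of $A$ coincides with $\gcd(S)$; indeed, $A^{[\gcd(S)]}$ is saturated by the standard fact that a subsemigroup of $\num$ contains every sufficiently large multiple of its gcd, while no smaller positive integer can saturate $A$ by a direct check. Consequently $S \subset n\num$, so $A_d = 0$ whenever $n \nmid d$, and one has the identity of formal power series
\[ H_A(t) = H_{A^{[n]}}(t^n). \]
Passing to the limit $t \to 1$ via the substitution $s = t^n$ and the expansion $1 - t = (1-s)/n + O((1-s)^2)$ then yields
\[ \fct{Vol}(A) = \lim_{t \to 1}(1-t)^d H_{A^{[n]}}(t^n) = \frac{1}{n^d}\fct{Vol}(A^{[n]}). \]

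Finally, plugging the asymptotic $\dim_k (A^{[n]})_i = \fct{Vol}(A^{[n]})\, i^{d-1}/(d-1)! + O(i^{d-2})$ from Proposition \ref{dimvol} into the equality $\dim_k A_{ni} = \dim_k (A^{[n]})_i$ and rewriting $n^d i^{d-1} = n(ni)^{d-1}$ produces the stated formula. I anticipate that the one point genuinely requiring care is the identification of the saturator with $\gcd(S)$, since it is exactly this that forces the support of $A$ to lie in $n\num$ and hence yields $H_A(t) = H_{A^{[n]}}(t^n)$; everything else is routine substitution.
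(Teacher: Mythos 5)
Your proof is correct and follows essentially the same route as the paper: pass to the Veronese subring $A^{[n]}$, use $H_A(t)=H_{A^{[n]}}(t^n)$ to get $\fct{Vol}(A)=\fct{Vol}(A^{[n]})/n^d$, and then invoke Proposition \ref{dimvol}. The only difference is that you supply details the paper leaves implicit — in particular the semigroup argument identifying the saturator with the gcd of the support, which is exactly what justifies the identity $H_A(t)=H_{A^{[n]}}(t^n)$ that the paper simply asserts.
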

\begin{proof}
Let $H_{A^{[n]}}(t) = \frac{\fct{Vol}(A^{[n]}) }{(1-t)^d} + \sum_{i=1}^{d-1} \sum_{\zeta \in \mu_{\infty } } \frac{n_{\zeta, i} }{(1-\zeta t)^{i}}$ be the partial fraction decomposition. Here we have $H_{A}(t) = H_{A^{[n]}}(t^n)$, therefore
\begin{align*} \fct{Vol}(A) &=  \lim_{t \to 1} (1-t)^d \left( \frac{\fct{Vol}(A^{[n]}) }{(1-t^n )^d} + \sum_{i=1}^{d-1} \sum_{\zeta \in \mu_{\infty } } \frac{n_{\zeta, i} }{(1-\zeta t)^{i}} \right) \\
&= \fct{Vol}(A^{[n]}) \cdot \lim_{t \to 1} \frac{(1-t)^d}{(1-t^n)^d} \\
&= \fct{Vol}(A^{[n]}) \cdot \frac{1}{n^d}.
\end{align*} 
By substituting this into Proposition \ref{dimvol} for $A^{[n]}$, we obtain the assertion.
\end{proof}
From now on, we fix a graded algebra $A := \bigoplus_{i=0}^{\infty} A_i$ which is an integral domain. We also fix a graded subalgebra $B$ of $A$. For any graded algebra $C$, we write $KP(C)$ for the rational function field $K(\fct{Proj}C)$. We have 
\[ KP( A) = \bigcup_{i = 0}^{\infty} \left\{  \left. \frac{a_i}{a'_i} \ \right| a_i \in A_i,\ a'_i \in A_i \setminus \{ 0 \} \right\}. \]
We assume that $KP(A)$ is a finite extension of $KP(B)$ and write $D$ for the degree of extension.
\begin{prop}There exists a $KP(B)$-basis of $KP(A)$ of the following form:
\[ \left\{ \left. \frac{a_i}{b_i} \ \right| i = 1, 2,\ldots ,D,\ b_i \in B_{n_i} , a_i \in A_{n_i} \right\}. \]
\end{prop}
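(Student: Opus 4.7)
The plan is to reduce the proposition to the following pointwise claim: every $y \in KP(A)$ can be written as $y = a/b$ with $a \in A_n$ and $b \in B_n$ for some $n \geq 0$. Granted this, one starts from any $KP(B)$-basis $(y_1,\ldots,y_D)$ of $KP(A)$ (which exists because $D < \infty$), rewrites each $y_i$ in the asserted form, and the resulting tuple is still a $KP(B)$-basis since each $y_i$ has only been re-expressed, not changed.

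To establish the pointwise claim, write $y = a/a'$ with $a, a' \in A_n$ and $a' \neq 0$. A standard transcendence-degree calculation (using that $\fct{Frac}(A)$ is obtained from $KP(A)$ by adjoining any single nonzero homogeneous element of positive degree, and similarly for $B$) shows that the finiteness of $KP(A)/KP(B)$ forces the extension $\fct{Frac}(A)/\fct{Frac}(B)$ to be algebraic. Hence $a'$ satisfies a minimal polynomial
\[
T^r + c_{r-1}T^{r-1} + \cdots + c_1 T + c_0 = 0,\qquad c_i \in \fct{Frac}(B),\ c_0 \neq 0.
\]

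The central technical point, and where I expect most of the work to lie, is to upgrade this purely algebraic relation to one compatible with the grading: I need each $c_i$ to be a quotient of homogeneous elements of $B$, of degree $n(r-i)$. The plan is to exploit the $\gmult$-action on $\fct{Frac}(A)$ induced by the grading on $A$ (and on $B$). Since $a'$ is a $\gmult$-weight vector of weight $n$, applying $t \in \gmult$ to the minimal polynomial and invoking uniqueness of minimal polynomials forces $t \cdot c_i = t^{n(r-i)} c_i$; combining this with the standard identification of $\gmult$-weight vectors inside $\fct{Frac}(B)$ with the localization of $B$ at nonzero homogeneous elements yields the claimed homogeneity of the $c_i$.

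Once homogeneity is in hand, the remainder is bookkeeping: choose a single nonzero homogeneous $\gamma \in B$ clearing the denominators of all the $c_i$, so that $\gamma c_i \in B$ for every $i$. The relation becomes $\gamma (a')^r + \sum_{i=0}^{r-1}(\gamma c_i)(a')^i = 0$ in $A$, and factoring out $a'$ from the nonconstant terms gives $a' \cdot \tilde a = -\gamma c_0$ for some homogeneous $\tilde a \in A$, with $-\gamma c_0$ a nonzero homogeneous element of $B$. Therefore $y = a/a' = (a \tilde a)/(-\gamma c_0)$ is of the required shape, and applying this rewrite to each element of an arbitrary $KP(B)$-basis of $KP(A)$ completes the proof.
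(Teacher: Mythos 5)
Your argument is correct, but it follows a genuinely different route from the paper's. The paper starts from an arbitrary basis $\{a_i/a'_i\}$ with $a_i,a'_i\in A_{n_i}$, arranges (after multiplying numerator and denominator by a homogeneous element of $A$ of large degree) that $B_{n_i}\neq 0$ so a nonzero $b_i\in B_{n_i}$ can be chosen, and then uses that the elements $a_i/b_i$, $a'_i/b_i$ are algebraic over $KP(B)$ to conclude that monomials of bounded total degree in them span $KP(A)$ over $KP(B)$; each such monomial automatically has denominator a monomial in the $b_i$'s, hence a homogeneous element of $B$ of the same degree as its numerator, and a basis is extracted from this spanning set. You instead prove the stronger pointwise statement that \emph{every} element of $KP(A)$ equals $a/b$ with $a\in A_n$, $b\in B_n$, via the minimal polynomial of the denominator over $\fct{Frac}(B)$ and the $\gmult$-equivariance argument (equivalently, the standard fact that the minimal polynomial of a homogeneous element over the graded fraction field of $B$ has homogeneous coefficients of the expected degrees). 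Your route buys a cleaner and stronger conclusion at the cost of that graded-minimal-polynomial lemma, which you only sketch; the sketch is essentially right, though your parenthetical that $\fct{Frac}(A)$ is \emph{obtained from} $KP(A)$ by adjoining one homogeneous element $f$ is not literally true in general ($\fct{Frac}(A)$ is only algebraic over $KP(A)(f)$), which is all your transcendence-degree count actually needs. Both proofs implicitly assume $B$ has nonzero elements of positive degree, which holds in the paper's applications.
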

\begin{proof}We write $K$ for the field $KP(B)$. Let $ \{ \frac{a_i}{a'_i} \mid i = 1,2,\ldots ,D,\  a_i,a'_i \in A_{n_i }\} $ be a $K$-basis of $KP(A)$. If $B_{n_i} = 0$ we replace $(a_i, a'_i)$ by $( aa_i , aa'_i )$ for $a \in A$ of sufficiently large degree, and then we can assume $B_{n_i} \neq 0$ and take $b_i \in A_{n_i} \setminus \{ 0 \}$. We have \[ K\left( \frac{a_1}{b_1} , \frac{a'_1}{b_1}, \ldots ,\frac{a_D}{b_D}, \frac{a'_D}{b_D} \right) = KP(A) \]
and each $\frac{a_i}{b_i} $ or $\frac{a'_i}{b_i}$ is integral over $K$. Therefore, for sufficiently large $N$, 
\[ \left\{ \left.  \frac{a_1^{e_1} \cdots a_D^{e_D}  a_1^{\prime e'_1} \cdots a_D^{\prime e'_D}}{b_1^{e_1+e'_1} \cdots b_D^{e_D+e'_D}} \ \right| \sum_i e_i + \sum_i e'_i \leq N \right\} \]
is a generator of $KP(A)$ as a $K$-vector space.
\end{proof}
By reducing to a common denominator, we obtain the following: 
\begin{cor}\label{abbasis}There exists a $KP(B)$-basis of $KP(A)$ which is the form
\[ \left\{  \left. \frac{a_i}{b_0} \ \right| i = 1, 2,\ldots ,D,\ b_0 \in B_{n} , a_i \in A_{n} \right\}. \]
\end{cor}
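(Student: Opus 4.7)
The plan is to start from the basis produced by the preceding proposition and clear denominators so that a single element $b_0 \in B_n$ appears in every denominator.

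Concretely, I first invoke the preceding proposition to obtain a $KP(B)$-basis
\[ \left\{ \frac{a_i}{b_i} \ \middle|\ i = 1,\ldots,D,\ a_i \in A_{n_i},\ 0 \neq b_i \in B_{n_i} \right\} \]
of $KP(A)$. I then set $n := \sum_{i=1}^{D} n_i$ and $b_0 := \prod_{i=1}^{D} b_i$. Because each $b_i$ is a nonzero homogeneous element of the graded subalgebra $B$, the product $b_0$ is a nonzero element of $B_n$ (using that $A$, and hence $B$, is an integral domain). For each $i$, I put $\tilde a_i := a_i \cdot \prod_{j \neq i} b_j$, which lies in $A_{n_i + \sum_{j \neq i} n_j} = A_n$ by graded multiplicativity.

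The key observation is that the rewriting $a_i/b_i = \tilde a_i / b_0$ holds identically in $KP(A)$, so the set $\{\tilde a_i / b_0 : i = 1,\ldots,D\}$ coincides with the original basis as a subset of $KP(A)$ and therefore remains a $KP(B)$-basis. This delivers the common-denominator form demanded by the statement.

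There is essentially no technical obstacle: the only things being used are that $A$ is an integral domain (so that $b_0 \neq 0$ and the $\tilde a_i$ are genuinely well-defined numerators) and that the grading is multiplicative (so that $b_0$ and each $\tilde a_i$ land in the same graded piece of degree $n$). No further structural input about $A$ or $B$ is required, which is why the paper presents this as a corollary obtained simply ``by reducing to a common denominator''.
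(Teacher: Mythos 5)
Your argument is correct and is exactly the ``reducing to a common denominator'' step that the paper invokes to deduce the corollary from the preceding proposition: taking $b_0=\prod_i b_i\in B_{\sum n_i}$ and $\tilde a_i=a_i\prod_{j\neq i}b_j\in A_{\sum n_i}$ leaves the basis unchanged as a subset of $KP(A)$. No further comment is needed.
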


\begin{prop}\label{Voldeg} We have
\[ \frac{ \fct{Vol}(A)}{ \fct{Vol}(B) } \geq \frac{ s_A \fct{Vol}(A)}{s_B \fct{Vol}(B) } \geq [KP(A):KP(B)], \]
where $s_A$ and $s_B$ are the satulators of $A$ and $B$ respectively.
\end{prop}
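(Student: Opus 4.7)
The plan is to treat the two inequalities in turn. For the first inequality $\fct{Vol}(A)/\fct{Vol}(B) \geq s_A \fct{Vol}(A)/(s_B \fct{Vol}(B))$, it suffices to prove $s_A \mid s_B$ (which in particular gives $s_A \leq s_B$). To do this I would identify the saturator of a finitely generated $\mathbb{N}$-graded integral domain with the gcd of the set $\{i > 0 : A_i \neq 0\}$: since $A$ is an integral domain this set is closed under addition, so it is a numerical semigroup, and any such semigroup eventually contains every sufficiently large multiple of its gcd. The degree-preserving inclusion $B \hookrightarrow A$ then forces every positive degree occupied by $B$ to be occupied by $A$, so the gcd for $A$ divides the gcd for $B$, giving $s_A \mid s_B$.

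For the second inequality $s_A \fct{Vol}(A)/(s_B \fct{Vol}(B)) \geq D := [KP(A):KP(B)]$, the plan is to build an explicit $k$-linear injection into a single graded piece of $A$ and then read off the inequality from the asymptotic of Corollary \ref{dvcor}. By Corollary \ref{abbasis} I would fix a $KP(B)$-basis $\{a_1/b_0, \dots, a_D/b_0\}$ of $KP(A)$ with $a_i, b_0 \in A_n$ and $b_0 \in B_n \setminus \{0\}$; the characterization of $s_B$ above forces $s_B \mid n$. For each sufficiently large multiple $N$ of $s_B$ I would consider
\[ \phi : B_{N-n}^{\oplus D} \to A_N, \qquad (b^{(1)}, \dots, b^{(D)}) \mapsto \sum_{i=1}^{D} b^{(i)} a_i. \]
The crux is the injectivity of $\phi$: if $\sum_i b^{(i)} a_i = 0$, pick any nonzero $c \in B_{N-n}$ and divide by $c b_0 \in A_N \setminus \{0\}$ inside the fraction field of $A$ to obtain $\sum_i (b^{(i)}/c)(a_i/b_0) = 0$ in $KP(A)$; the coefficients $b^{(i)}/c$ belong to $KP(B)$ because $b^{(i)}$ and $c$ both have degree $N - n$, and the $KP(B)$-linear independence of the chosen basis forces every $b^{(i)}$ to vanish.

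Injectivity yields $\dim_k A_N \geq D \cdot \dim_k B_{N-n}$. Since $s_A \mid s_B \mid N$ and $s_B \mid N - n$, Corollary \ref{dvcor} applies to both sides; substituting the leading-order asymptotics and letting $N \to \infty$ along multiples of $s_B$ produces $s_A \fct{Vol}(A) \geq D \cdot s_B \fct{Vol}(B)$, the desired inequality. The main content is the injectivity argument, which converts the nice-basis statement of Corollary \ref{abbasis} into a one-step dimension comparison; the remaining obstacle is purely bookkeeping of the divisibility conditions, guaranteeing that the asymptotic formula for $\dim_k A_N$ and for $\dim_k B_{N-n}$ applies at the chosen degrees.
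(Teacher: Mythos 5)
Your proof is correct and follows essentially the same route as the paper's: the first inequality reduces to $s_A \le s_B$ (which the paper treats as immediate from $B \subset A$ and you justify via the gcd characterization of the saturator), and the second comes from the common-denominator basis of Corollary \ref{abbasis}, the injectivity of $(b^{(i)}) \mapsto \sum_i b^{(i)} a_i$, and the asymptotics of Corollary \ref{dvcor}. Your write-up is somewhat more careful than the paper's about the divisibility bookkeeping needed to invoke Corollary \ref{dvcor}, but the argument is the same.
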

\begin{proof}
The first inequality is immediate from the inclusion $B \subset A$. 

Let $ \{ \frac{a_i}{b_0} \mid i = 1, 2,\ldots ,D,\ b_0 \in B_{m} , a_i \in A_{m} \} $ be a $KP(B)$-basis of $KP(A)$ given by Corollary \ref{abbasis}. Then, the morphism \[ B^{\oplus D} \ni (b_i) \mapsto \sum_{i = 1}^{D} a_ib_i \in A \]
is injective. Therefore, we have $\dim A_{m+n}  \geq D\dim B_m $ for an arbitrary $m$. By applying Corollary \ref{dvcor}, we have \[ \frac{s_A \fct{Vol}(A)(m+n)^{r-1} }{(r-1)!} + O((m+n)^{r-2}) \geq D \frac{s_B \fct{Vol}(B)m^{r-1} }{(r-1)!} + O(m^{r-2}) (m \to \infty ). \]
Therefore, we have
\[  s_A \fct{Vol}(A)  \geq D s_B \fct{Vol}(B). \]
\end{proof}





\section{Degree Bound of Multiplier Map}\label{sc_degbd}

The volumes of the invariant algebras of irreducible representations of $\slt$ are classically calculated by Hilbert and the reducible cases are done in \cite{dc-h-h-s2020hilbert}.
\subsection{Schur Polynomial}
In \cite{dc-h-h-s2020hilbert}, Schur Polynomials are used to express the volumes of invariant rings. We briefly introduce the polynomials in a form that we can instantly give an explicit evaluation of the volumes.

\begin{deff}(Schur Polynomial) For a sequence of nonnegative integers $(d_i)$ of length $l$, the {\em Schur polynomial} $s_{(d_i)}(x_i)$ of $(d_i)$ is the symmetric polynomial of $l$ variables such that
\[ s_{(d_i)}(x_i) = \frac{\det (x_i^{j + d_j-1 })_{i,j = 1}^l }{\det (x_i^{j-1})_{i,j=1}^l} . \]
\end{deff}
\begin{deff} For a nonincreasing sequence of nonnegative integers $(d_i)$, the Young tableau of $(d_i)$ is the set of lattice points
\[ T(d_i) := \{ (i,j) \in \zahl^2 \mid 1 \leq i \leq d_j \}. \]
The set of $n$-semistandard tableau is the set 
\[ \fct{SST}_n(d_i) :=  \{ f: T(d_i) \to \{ 1, \ldots , n \} \mid f(i ,j ) \leq f(i+1 , j) , f(i, j)<f(i,j+1)\}. \]
\end{deff}
\begin{thm} \label{kostka}(Kostka's Definition \cite[Corollary 12.5]{prasad2019schur}) For a decreasing sequence of nonnegative integers $(d_1 , \ldots , d_l)$, we have
\[ s_{(d_1, \ldots , d_l , 0, 0, \ldots, 0 )} (x_1, \ldots, x_{l+k}) = \sum_{f \in \fct{SST}_{l+k}(d_i)} \prod_{(i,j) \in T(d_i)}x_{f(i,j)} . \]
\end{thm}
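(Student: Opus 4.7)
The plan is to give a bijective proof via the Lindström--Gessel--Viennot (LGV) lemma. First I would set $n = l+k$ and extend $(d_i)$ to a partition $\lambda = (d_1,\ldots,d_l,0,\ldots,0)$ of length $n$. Writing $\mu_j := \lambda_j + n - j$, the bialternant definition becomes
\[
s_\lambda(x_1,\ldots,x_n) = \frac{\det(x_i^{\mu_j})_{i,j=1}^n}{\det(x_i^{n-j})_{i,j=1}^n},
\]
so the denominator is the Vandermonde $\prod_{i<j}(x_i - x_j)$. The strict decrease $\mu_1 > \mu_2 > \cdots > \mu_n \geq 0$ is what will allow the combinatorial interpretation below.

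Next I would model the determinant in the numerator by weighted lattice paths. Place sources $A_i = (0,i)$ and sinks $B_j = (\mu_j + 1, n+1)$, and consider lattice paths moving by unit east or north steps. Assign each east step taken at height $h$ the weight $x_h$. Then the weighted count of all paths from $A_i$ to $B_j$ is a monomial symmetric expression whose determinant, by LGV, equals the signed sum over $n$-tuples of vertex-disjoint paths $P_1,\ldots,P_n$ with $P_i$ going from $A_i$ to $B_{\sigma(i)}$; non-crossingness forces $\sigma = \id$, so the signed sum becomes an unsigned sum. The Vandermonde is the special case $\lambda = 0$, where the unique non-crossing configuration is the trivial one, justifying its appearance as the normalization.

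Then I would construct the bijection with semistandard tableaux. Given a non-crossing system of paths, record for each path $P_j$ (ending at column $\mu_j + 1$) the heights of its east steps; this produces a column of $\lambda_j$ entries from $\{1,\ldots,n\}$, strictly increasing because distinct east steps of one path happen at distinct heights, and the non-crossing condition across adjacent paths forces the weak row-increase required by the SSYT definition. Conversely, any $f \in \fct{SST}_n(d_i)$ reconstructs a unique non-crossing path system whose monomial weight is exactly $\prod_{(i,j)\in T(d_i)} x_{f(i,j)}$. Dividing by the Vandermonde identifies the result with $s_\lambda$.

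The main obstacle is bookkeeping: aligning the shifted indices $\mu_j$ with the geometry of the sinks so that the non-crossing condition translates exactly into the strict column / weak row conditions of semistandard tableaux, and confirming that the sign contributions cancel so that the quotient is a positive sum of monomials rather than a signed alternating sum. Once indexing conventions are fixed, the bijection and the weight-preservation are routine; no nontrivial algebraic identities beyond the LGV lemma itself are needed.
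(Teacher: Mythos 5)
The paper does not actually prove this statement --- it is quoted from the literature (Prasad, Corollary 12.5) and used as a black box --- so your attempt can only be measured against the classical proofs. Your overall strategy (lattice paths plus the Lindstr\"om--Gessel--Viennot lemma) is a legitimate route, and your dictionary between non-intersecting path families and semistandard tableaux is the standard, correct one. The gap is in the step that connects the LGV determinant to the bialternant quotient. With sources $A_i=(0,i)$, sinks $B_j=(\mu_j+1,n+1)$, and each east step at height $h$ weighted $x_h$, the generating function of \emph{all} paths from $A_i$ to $B_j$ is the complete homogeneous symmetric polynomial $h_{\mu_j+1}(x_i,x_{i+1},\ldots,x_{n+1})$, not the single monomial $x_i^{\mu_j}$. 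So the determinant that LGV evaluates is a (flagged) Jacobi--Trudi-type determinant $\det\bigl(h_{\mu_j+1}(x_i,\ldots,x_{n+1})\bigr)$, which is \emph{not} the numerator $\det\bigl(x_i^{\mu_j}\bigr)$ of the bialternant. Consequently ``dividing by the Vandermonde'' is not justified by anything you have set up: the LGV lemma carries no quotient structure, and observing that the Vandermonde is ``the $\lambda=0$ case'' does not imply that the ratio of two non-intersecting-path counts is the tableau sum. This is the essential missing idea, not bookkeeping.

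Two standard repairs are available. (a) Re-flag the construction so that the $(i,j)$ entry of the matrix is a complete homogeneous polynomial in the \emph{truncated} alphabet $x_1,\ldots,x_i$ (path $i$ confined to heights at most $i$); then LGV turns the determinant into exactly the sum over $\fct{SST}_{l+k}(d_i)$ with no residual flag condition, and one proves separately, by row reduction with the recurrence $h_m(x_1,\ldots,x_i)-x_i\,h_{m-1}(x_1,\ldots,x_i)=h_m(x_1,\ldots,x_{i-1})$, that this determinant equals $\det\bigl(x_i^{\mu_j}\bigr)/\prod_{i<j}(x_i-x_j)$. (b) Use LGV only to prove the unflagged Jacobi--Trudi identity expressing the tableau sum as $\det\bigl(h_{\lambda_i-i+j}(x_1,\ldots,x_n)\bigr)$, and then supply the separate algebraic argument (generating functions or cofactor expansion) that this determinant equals the bialternant quotient. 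Either way an algebraic identity must be proved in addition to the LGV step. Two smaller points: with sinks at height $n+1$ your weights involve a nonexistent variable $x_{n+1}$; and the paper's bialternant is written with exponents $j+d_j-1$ increasing in $j$ (columns in reversed order), which you should reconcile with the usual $\lambda_j+n-j$ convention before matching signs.
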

\subsection{Degrees of Linear Systems of Multiplier Maps}
The graph cycle $\Gamma_{d,e}$ over the moduli of correspondence is the irreducible hyperplane
\[ V_+ \left( \sum_{i,j} a_{ij}x^iy^j \right) \subset \proj (\langle a_{ij} \rangle ) \times \pone_x \times \pone_y = \ccor_{d,e} \times \pone_x \times \pone_y. \]
As a hyperplane, this is of degree $(1,d,e)$. Let $f(x,y) := \sum_{i,j} a_{ij}x^iy^j$ be the defining polynomial of $\Gamma$. The $n$-th iteration of $f$, $\Psi_n (f)$ is given by
\[ \Psi_n (f) (z_0,z_n) = \fct{res}_{z_1 ,\ldots , z_{n-1} }(f(z_0,z_1), f(z_1, z_2) ,\ldots , f(z_{n-1},z_n)). \]
Let $\Psi_n \Gamma$ be the graph of iteration morphism
\[ \Psi_n \Gamma := V_+ \left( \Psi_n(f)(x,y) \right) \subset \ccor_{d,e} \times \pone_x \times \pone_y. \]
By using the expression of composition using the resultant \cite[Section 5]{gotou2023dyn_corr_moduli} and the Sylvester formula, $\Psi_n \Gamma$ is of degree $(\frac{d^n-e^n}{d-e} , d^n, e^n)$.
The cycle of periodic points $\per_n$ of period $n$ is given by 
\[ \per_n := V_+ ( \Psi_n (f)(z,z) ) \subset \ccor_{d,e} \times \pone_z.\]
From the degree of $\Psi_n \Gamma$, the degree of $\per_n$ is $(\frac{d^n-e^n}{d-e} , d^n + e^n)$.

We remark that the cycle of periodic points $\fct{Per}_n$ includes the cycles of fixed points, and moreover the cycles $\fct{Per}_m$ for $m|n$. We define the scheme $\per_n^*$ of periodic points of formal period $n$ by extracting the periodic points of shorter periods. More explicitly, we define effective divisors $\per^*_n$ inductively as
\[ \per^*_1 := \per_1,\per_n^* := \per_n - \sum_{m < n, m|n} \per_m. \]

Let $\nu_n(x)$ be the family of polynomials, asymptotically defined by 
\[ \nu_1(x) = x, \nu_n(x) = x^n - \sum_{m < n, m|n} \nu_m(x). \]
In a closed form, $\nu_n$ is written by using the M\"{o}bius function $\mu$,
\[ \nu_n(x) = \sum_{m|n} \mu\left( n/m \right) x^n. \]
Then the degree of $\per^*_n$ is given by 
\[ \left( \frac{\nu_n(d) - \nu_n (e)}{d - e}, \nu_n (d) + \nu_n (e) \right) . \]
We write $\Pi_n^* f(z)$ for a defining form of the divisor $\per^*_n$.
\begin{rem} We have $\nu_n(1) = 0$ for $n > 1$. For the cases only considering rational maps, $\nu_n(d) + \nu_n(1)$ is sometimes used instead of $\nu_n(d)$ (for instance, $\nu_n(d)$ in \cite[Remark 4.3]{silverman2007arithmetic} and $N_n(d)$ in \cite[Chapter 4]{dolotin2006universal}).
\end{rem}

\begin{prop}\label{linsys}
    If the multiplier map
    \[ \lambda_{n , (d,e) } : \ccor_{d,e} \ratmap \ccor_{d^n , e^n} \ratmap \proj^{d^n  + e^n} \]
    is well-defined, then it is given by a linear system of degree
    \[ 2(d^n+e^n - 1)\frac{d^n - e^n}{d-e}. \]
    Moreover, in this case, we can define the multiplier map of the periodic orbits of period $n$,
    \[ \orbmult_{n,(d,e)} : \ccor_{d,e} \ratmap \proj^{ ( \nu_n(d) + \nu_n (e) ) / n} \]
    and it is given by a linear system of degree at most
    \[ \frac{2 ((d^n-1)\nu_n(d) - (e^n-1)\nu_n (e))}{n (d - e) } .\]
\end{prop}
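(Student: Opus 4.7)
The plan is to exploit the factorization $\lambda_{n,(d,e)} = \lambda_{1,(d^n,e^n)} \circ \Psi_n$ from the introduction and bound the two constituent linear systems separately. By the Sylvester-formula computation recalled just before the proposition, $\Psi_n\Gamma$ has bidegree $\bigl(\tfrac{d^n-e^n}{d-e},\,d^n,\,e^n\bigr)$ in $(a_{ij},x,y)$, so the coordinates of $\Psi_n f$ on $\ccor_{d^n,e^n}$ are polynomials of degree $\tfrac{d^n-e^n}{d-e}$ in the $a_{ij}$ of $\ccor_{d,e}$. It remains to show that $\lambda_{1,(D,E)}$ is given by a linear system of degree $2(D+E-1)$ and then to run the analogous construction on $\per_n^*$ for the orbit formula.

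For $\lambda_{1,(D,E)}:\ccor_{D,E}\ratmap\proj^{D+E}$, I would realize the map by the multiplier binary form
\[
P(u,v) \;:=\; \prod_{z:\,f(z,z)=0}\bigl(u\partial_x f(z,z)+v\partial_y f(z,z)\bigr),
\]
whose coefficients in $(u,v)$ are the coordinates of the image. The resultant identity
\[
a_{D,E}^{D+E-1}\,P(u,v) \;=\; \fct{Res}_z\bigl(f(z,z),\,u\partial_x f(z,z)+v\partial_y f(z,z)\bigr),
\]
combined with the Sylvester degree formula, makes the right-hand side bi-homogeneous of bidegree $\bigl(2(D+E)-1,\,D+E\bigr)$ in $(a,(u,v))$: $f(z,z)$ has $z$-degree $D+E$ with coefficients linear in $a$, while $u\partial_x f+v\partial_y f$ has $z$-degree $D+E-1$ with coefficients linear in $a$ and $(u,v)$. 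Dividing once by the leading coefficient $a_{D,E}$ produces a representative of $\lambda_{1,(D,E)}$ of $a$-degree $2(D+E-1)$, and composing with $\Psi_n$ via $(D,E)\leftarrow(d^n,e^n)$ yields the required linear system of degree $2(d^n+e^n-1)\cdot\tfrac{d^n-e^n}{d-e}$.

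For $\orbmult_{n,(d,e)}$, I would repeat the construction on the formal-period-$n$ scheme $\per_n^*$ (defined by $\Pi_n^* f(z)$ of bidegree $(\tfrac{\nu_n(d)-\nu_n(e)}{d-e},\,\nu_n(d)+\nu_n(e))$) using $\partial_x(\Psi_n f)(z,z)$ and $\partial_y(\Psi_n f)(z,z)$ (of bidegree $(\tfrac{d^n-e^n}{d-e},\,d^n+e^n-1)$) in place of the derivatives of $f$. The key structural input is that at any formal-period-$n$ point $z$ the fixed-point multiplier of $\Psi_n f$ equals $\prod_{k=0}^{n-1}\lambda_{z_k}(f)$ and is therefore constant along its $n$-cycle, so the resultant analogue, divided once by $\mathrm{lc}(\Pi_n^* f)$, is the $n$-th power of the genuine orbit-multiplier binary form $Q(u,v)$. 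Extracting this $n$-th root, redoing the Sylvester degree count, and simplifying via $(A+B)(C-D)+(A-B)(C+D)=2(AC-BD)$ with $A=d^n,\,B=e^n,\,C=\nu_n(d),\,D=\nu_n(e)$ (together with the $-2(C-D)$ correction coming from the extra division by $\mathrm{lc}(\Pi_n^* f)$) collapses the bound to the announced $\tfrac{2((d^n-1)\nu_n(d)-(e^n-1)\nu_n(e))}{n(d-e)}$. The main obstacle — and the reason the statement reads "at most" — is justifying the $n$-th root extraction inside the coordinate ring $k[a_{ij}][u,v]$: one must verify that, on a dense open subset of $\ccor_{d,e}$, the formal orbits genuinely have length $n$ and the orbit multiplier is constant along them, so that the resultant is truly an $n$-th power in the polynomial ring rather than merely in its field of fractions.
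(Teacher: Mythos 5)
Your proposal is correct and follows essentially the same route as the paper: the first bound comes from composing the fixed-point multiplier map, realized as a resultant of $f(z,z)$ against $u\partial_xf+v\partial_yf$ and counted via the Sylvester formula (the paper cites its prequel for this degree $2(d+e-1)$), with the degree-$\tfrac{d^n-e^n}{d-e}$ linear system of $\Psi_n$; the orbit version likewise takes the resultant against $\Pi_n^*f$, uses the chain rule to see the form is an $n$-th power along each cycle, and extracts the root. The concern you flag about justifying the $n$-th root in the polynomial ring is real but is treated at the same level of (in)formality in the paper, which simply defines $\orbmult_n$ as ``an $n$-th root of some quotient'' of the resultant form.
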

\begin{proof}
    By \cite[Section 7]{gotou2023dyn_corr_moduli} and the Sylvester formula, the degree of fixed point multiplier map is $2(d+e-1)$. Since the morphism $\Psi_n : \ccor_{d,e} \ratmap \ccor_{d^n , e^n}$ is given by a linear system of degree $\frac{d^n - e^n}{d-e}$, we obtain the first assertion.
    Moreover, the morphism $\lambda_n := \lambda_{n,(d,e)}$ is given by 
    \begin{align}
        \lambda_n([f]) & = \lambda_{1,(d,e)} ([\Psi_nf]) \nonumber \\
        & = \left[ \prod_{z : \Psi_n f(z,z) = 0}\left( \partial_x\Psi_n f(z,z) dx + \partial_y\Psi_nf(z,z) dy \right) \right] \in \proj (D_N), \label{nmultprod}
    \end{align}
    where $N = \deg_z \Psi_n f(z,z) = d^n+e^n$.
    The well-definedness of $\lambda_n$ implies that the factors of \eqref{nmultprod} are not zero. Therefore, we can define $\ptmult_n([f])$ as
    \begin{equation}
         \ptmult_n([f]) := \left[ \prod_{z : \Pi_n^* f(z) = 0}\left( \partial_x\Psi_n f(z,z) dx + \partial_y\Psi_nf(z,z) dy \right) \right] \in \proj (D_M), \label{nmultptdef}
    \end{equation}
    where $M = \deg_z  \Pi_n^* f(z) = \nu_n(d) + \nu_n(e)$.
    By \cite[Remark 7.6]{gotou2023dyn_corr_moduli}, we can write this multiplier map as
    \begin{align}
        \ptmult_n ([f]) & = \left[ \fct{res}_z\left( \Pi_n^*f(z), \partial_x\Psi_n f(z,z) dx + \partial_y\Psi_nf(z,z) dy \right) \right] \nonumber \\
        & = \left[ \fct{res}_z\left( \Pi_n^*f(z), d_z \Psi_nf(z,z) dz_0 + z_0z_1\Omega^1\Psi_nf(z) dz_1 \right) /A_{n,0}A_{n,1}\right], \label{nmultptomega}
    \end{align}
    where $A_{n,0}$ and $A_{n,1}$ are the coefficients of respectively $z_0^M$ and $z_1^M$ of $\Pi_n^*f(z)$.
    Therefore, by the Sylvester formula, the rational map $\lambda^\circ_n([f])$ is given by the linear system given by the coefficients of $dz_0^idz_1^{M-i}$ of
    \begin{equation}
        \fct{res}_z\left( \Pi_n^*f(z), d_z \Psi_nf(z,z) dz_0 + z_0z_1\Omega^1\Psi_nf(z) dz_1 \right) /A_{n,0}A_{n,1}, \label{nmultform}
    \end{equation}
    and their degree is
    \begin{equation}
        (d^n+e^n) \frac{\nu_n(d) - \nu_n (e)}{d - e} +  (\nu_n (d) + \nu_n (e)) \frac{d^n - e^n}{d - e} - 2\frac{\nu_n(d) - \nu_n (e)}{d - e}. \label{nmulptdeg}
    \end{equation}
    Any periodic orbit of formal period $n$, of a correspondence defined by $f(x,y)$ is given by a tuple of points $(z_0, z_1,\ldots , z_n = z_0)$ such that $f(z_i , z_{i+1} ) = 0$. From the differential of composite functions, for the periodic points of the same periodic orbits, the factor in \eqref{nmultptdef} takes the same value, that is,
    \begin{equation}
         \ptmult_n([f]) = \left[ \prod_{\substack{ (z_0,\ldots , z_{n-1}) : \\ \text{Periodic orbits of }f(x,y) }}\left( dx + \frac{ \partial_y\Psi_nf(z_0,z_0)}{\partial_x\Psi_n f(z_0,z_0)} dy \right)^n \right]. \label{nmultpow}
    \end{equation}This leads that the map $\ptmult_{n,(d,e)}([f])$ is given by an $n$-th power of some rational function. Therefore, we can define $\orbmult_n$ as an $n$-th root of some quotient of \eqref{nmultform}.
\end{proof}
\begin{rem}\label{immults}
    By definition, for the Veronese embedding \[ \nu_n : \proj^{M/n} \simeq \proj (k[x,y]_{M/n} ) \to \proj( k[x,y]_{M}) \simeq \proj^{M} : f\mapsto f^n, \] we have $\ptmult_n = \nu_n \circ \orbmult_n$. Moreover, since for any periodic orbit $(z_0, \ldots , z_{n-1},z_n = z_0)$ we have
    \[ \frac{ \partial_y\Psi_{mn}f(z_0,z_0)}{\partial_x\Psi_{mn} f(z_0,z_0)} = \left( \frac{ \partial_y\Psi_nf(z_0,z_0)}{\partial_x\Psi_n f(z_0,z_0)} \right)^m, \]
    we can see that 
    \[ \fct{Im} \Lambda_n \simeq \fct{Im} \Lambda^{\bullet}_{n} \simeq \fct{Im} \Lambda^{\circ}_{n} , \]
    where
    \[ \Lambda^\alpha_{n} := \prod_{m : m|n} \lambda^\alpha_{m} \text{ for } \alpha \in \{ \circ, \bullet \}. \]
\end{rem}
\begin{rem} Despite the form in \eqref{nmultptomega} is given by an $n$-th power of some polynomial, it is difficult to obtain more explicit form of the $n$-th root $\orbmult_n$. This phenomenon happens in computing resultant by Cayley's formula (\cite{eisenbud2003resultants}). If $n = 2$, Pfaffian is sometimes used to compute Cayley's formula. Whether analogous method exist for the second multiplier map is a problem. As we see in \eqref{sgncpt}, if we have a method to choose a specified branch of the roots, we can compute the root directly by interpolation.
\end{rem}

\subsection{Evaluation}
We use the following result to calculate the volume.
\begin{thm}\label{chhs} (\cite{dc-h-h-s2020hilbert}) Let $V \in \fct{Rep}_k(V_1)$ be a representation of $\slt (k)$ and $\fct{dim}_k V = n$. Then the Taylor expansion of the Hilbert function of the invariant ring $I(V) := k[V]^{\slt (k)}$ at $t=1$ has the form
\[ H_{I(V)}(t) = (1-t)^{-d+3} \cdot \sum_{i = 0}^{\infty} \gamma_i (1-t)^i. \]
Let $(a_i)$ be the positive weights of $V$ and $l$ be the length of the sequence. Then we have
\[ \gamma_0 = \gcd( 2, a_1 , \ldots , a_l ) \frac{s_{(l-3,l-3,l-3 , l-4 , l-5, \ldots ,2, 1,0)} (a_1 , a_2, \ldots , a_l) }{s_{(l-1,l-2,l-3, l-4 , l -5, \ldots , 2,1,0 )} (a_1, a_2 , \ldots , a_l) }. \]
\end{thm}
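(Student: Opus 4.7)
The plan is to start from the Molien--Weyl integration formula for $\slt$. Decomposing $V$ into weight spaces of a maximal torus, with positive weights $(a_1,\ldots,a_l)$ (each paired with $-a_i$) and $d_0 = n - 2l$ zero weights, Weyl integration yields
\[ H_{I(V)}(t) = \frac{1}{2} \cdot \frac{1}{2\pi i} \oint_{|z|=1} \frac{(1-z^2)(1-z^{-2})}{(1-t)^{d_0}\prod_{i=1}^{l}(1-tz^{a_i})(1-tz^{-a_i})} \cdot \frac{dz}{z}, \]
the factor $(1-z^2)(1-z^{-2})$ being the squared Weyl denominator and $\frac{1}{2}$ the inverse order of the Weyl group.

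Next I would extract the leading singular behaviour at $t = 1$. As $t \to 1^-$ the poles $z = t^{\mp 1/a_i}$ pinch the contour at $z = 1$, and also at $z = -1$ provided all $a_i$ are even. Setting $z = 1 + u(1-t)$ and Taylor expanding, each pair $(1-tz^{a_i})(1-tz^{-a_i})$ becomes $(1-t)^2(1 - a_i^2 u^2) + O((1-t)^3)$, while $(1-z^2)(1-z^{-2}) = -4 u^2 (1-t)^2 + O((1-t)^3)$. This verifies the pole order $2l + d_0 - 3 = n - 3$ at $t=1$—in agreement with the Krull dimension of $I(V)$—and reduces $\gamma_0$ to a universal residue integral
\[ \gamma_0 = \frac{\gcd(2,a_1,\ldots,a_l)}{2\pi i} \oint \frac{-2 u^2\, du}{\prod_{i=1}^{l}(1 - a_i^2 u^2)}, \]
the $\gcd$ factor recording whether the $z = -1$ pinching contributes equally (all $a_i$ even, so $-\fct{Id}\in\slt$ acts trivially on $V$) or only to lower order.

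Finally, I would evaluate this residue and match it to the claimed Schur polynomial ratio. Summing residues at $u = \pm 1/a_i$ expresses $\gamma_0$ as a symmetric rational function of $(a_1^2,\ldots,a_l^2)$; clearing the Vandermonde-like denominator $\prod_{i<j}(a_i^2 - a_j^2) = \prod_{i<j}(a_i-a_j)(a_i+a_j)$ converts it into a quotient of alternants in the $a_i$. By the Jacobi--Trudi (bialternant) identity this quotient equals $s_{(l-3,l-3,l-3,l-4,\ldots,0)}(a) / s_{(l-1,l-2,\ldots,0)}(a)$. The main obstacle will be this final combinatorial matching: one must verify that the exponent sequence $0, 2, 4, \ldots, 2l-8, 2l-6, 2l-5, 2l-4$ emerging from the doubled Weyl measure after residue extraction coincides with $j + d_j - 1$ for the partition $(l-3,l-3,l-3,l-4,\ldots,0)$ in the paper's increasing-index convention, the degree-one jump from $2l-6$ to $2l-5$ encoding the odd factor $(z - z^{-1})$ in the Weyl denominator.
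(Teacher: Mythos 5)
The paper offers no proof of this statement---it is imported as a citation from \cite{dc-h-h-s2020hilbert}---so there is nothing internal to compare against; judged on its own, your Molien--Weyl route is the standard derivation and the outline is sound. The pinch analysis is right: with $z=1+u(1-t)$ the integrand contributes a pole of order $2l+d_0-3=n-3$ at $t=1$, the $z=-1$ pinch doubles the leading term exactly when all $a_i$ are even, and the leading coefficient becomes a residue sum over the enclosed poles $u=-1/a_i$ (these are the ones coming from $z=t^{1/a_i}$ inside the unit circle; the poles $u=+1/a_i$ stay outside), namely
\[ \gamma_0 \;=\; -\gcd(2,a_1,\ldots,a_l)\sum_{i=1}^{l}\frac{a_i^{\,2l-5}}{\prod_{j\neq i}(a_i^2-a_j^2)} .\]
The ``main obstacle'' you flag at the end is actually the easiest step to close: in the paper's convention the numerator Schur polynomial is the alternant $\det(a_i^{e_j})$ with exponents $e=(0,2,\ldots,2l-8,\,2l-6,\,2l-5,\,2l-4)$ divided by the Vandermonde, and expanding that determinant along its unique odd-exponent column $2l-5$ leaves precisely the Vandermonde minors $\prod_{j<k,\ j,k\neq i}(a_k^2-a_j^2)$; dividing by $\det(a_i^{(0,2,\ldots,2l-2)})=\prod_{i<j}(a_j^2-a_i^2)$ reproduces the residue sum above, while the denominator Schur polynomial is $\prod_{i<j}(a_i+a_j)$. (Sanity check: for $V=V_5$, positive weights $(1,3,5)$, both sides give $\gamma_0=1/192$, consistent with $H_{I(V_5)}(t)=(1+t^{18})/((1-t^4)(1-t^8)(1-t^{12}))$.)

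Two loose ends remain if you want a complete proof. First, you must argue that pinch points at roots of unity of order greater than $2$ only produce poles of strictly smaller order at $t=1$ (fewer factors degenerate there), so they do not pollute $\gamma_0$. Second, the statement carries an implicit hypothesis: the formula presumes the generic orbit is three-dimensional (and indeed the partition $(l-3,l-3,l-3,\ldots)$ only parses for $l\geq 3$); for small $V$, e.g.\ $V=V_1$ or $V=V_3$, the pole order is not $n-3$, so your argument, like the cited theorem, needs that standing assumption.
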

\begin{rem}
In \cite{dc-h-h-s2020hilbert}, higher terms ($\gamma_1, \gamma_2, \gamma_3$) are also computed. 
\end{rem}

Throughout this subsection, we put $n := d+e$. We give a rough estimate of $\gamma_0$ for the case $V = V_d \otimes V_e$ of the moduli space of dynamical systems.

\begin{lemm}\label{schureval} We have 
\[ \fct{Vol} I(V_{d} \otimes V_{e}) \leq \frac{\gcd(n,2)}{2(n-2)(n-1)n}.  \]
\end{lemm}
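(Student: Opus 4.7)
Apply Theorem~\ref{chhs} with $V = V_d \otimes V_e$. By the Clebsch--Gordan decomposition $V_d \otimes V_e \cong \bigoplus_{k=0}^{\min(d,e)} V_{n-2k}$, the multiset of positive weights $(a_1,\ldots,a_l)$ has maximum $a_1 = n$ and all entries of the same parity as $n$; hence $\gcd(2, a_1,\ldots,a_l) = \gcd(n,2)$, matching the numerator of the claimed bound. It therefore suffices to show
\[
\frac{s_{\mu}(a)}{s_{\delta}(a)} \;\leq\; \frac{1}{2n(n-1)(n-2)},
\]
where $\delta = (l-1, l-2, \ldots, 0)$ and $\mu = (l-3, l-3, l-3, l-4, \ldots, 0)$ are the shapes appearing in Theorem~\ref{chhs}.

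The staircase denominator has the closed form $s_\delta(x_1, \ldots, x_l) = \prod_{i<j}(x_i+x_j)$, which follows by recognising its defining determinant $\det(x_i^{2(l-j)})$ as a Vandermonde in $y_i = x_i^2$ and dividing by the Vandermonde in the $x_i$. For the numerator I note that $\mu = \delta - (2,1,0,\ldots,0)$, so the determinantal matrices $(x_i^{\mu_j + l - j})$ and $(x_i^{\delta_j + l - j})$ share, after reordering, $l-1$ columns (those with even exponents $0, 2, \ldots, 2l-4$), and differ only in one column (exponent $2l-5$ for $\mu$ versus $2l-2$ for $\delta$). Laplace expansion along this differing column yields the partial-fraction identity
\[
\frac{s_\mu(a)}{s_\delta(a)} \;=\; -\sum_{m=1}^l \frac{a_m^{2l-5}}{\prod_{k \neq m}(a_k^2 - a_m^2)},
\]
valid whenever the $a_i$ are distinct. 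The term $m=1$ (so $a_m=n$) is dominant: its denominator contains the factors $n^2-(n-2)^2 = 4(n-1)$ and $n^2-(n-4)^2 = 8(n-2)$, which, combined with the remaining factors and with the $a_m^{2l-5}$ in the numerator, produce precisely the target $2n(n-1)(n-2)$. A comparison of the other terms with this one, using $a_m \leq n$ and $|a_k^2 - a_m^2| = |a_k-a_m|(a_k+a_m)$, shows that their net contribution does not exceed the $m=1$ term once the alternating signs are taken into account.

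The main obstacle is that for $V_d \otimes V_e$ the weight $n-2s$ has multiplicity $s+1$ for $s \leq \min(d,e)$, so several denominators $a_k^2 - a_m^2$ vanish and the partial-fraction formula is formally singular. The clean fix is to perturb the weights to nearby distinct values $a_i + \varepsilon_i$, establish the bound uniformly in $\varepsilon$, and pass to the limit $\varepsilon \to 0$; the divergent contributions from terms whose weights merge cancel pairwise, leaving a finite limit that still satisfies the stated inequality. Alternatively one can re-derive the bound directly from the Kostka (SSYT) formula for $s_\mu(a)$ at the repeated weights, which bypasses the continuity argument at the cost of heavier combinatorics. Either route, combined with the $\gcd$ factor identified in the first paragraph, yields the asserted upper bound.
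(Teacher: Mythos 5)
Your setup is a genuinely different route from the paper's: you work with the determinantal definition, use the product formula $s_\delta(x)=\prod_{i<j}(x_i+x_j)$ for the staircase, and reduce to a partial-fraction/divided-difference expression, whereas the paper never touches the determinants at all. It instead uses Kostka's SSYT expansion (Theorem \ref{kostka}) and exhibits two injections $\Phi_1,\Phi_2 : \fct{SST}_l(\alpha)\to\fct{SST}_l(\delta)$ with disjoint images, which gives the \emph{coefficientwise} inequality $s_\delta(x) \geq (x_{l-1}x_l^2+x_{l-1}^2x_l)\,s_\alpha(x)$; substituting the three largest weights $n-2,n-2,n$ then yields $2n(n-1)(n-2)$ immediately, with no issues about repeated weights or signs. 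Your preliminary observations (the value of $\gcd(2,a_1,\dots,a_l)$, the identity $\mu=\delta-(2,1,0,\dots,0)$, the shared columns, and the resulting alternating-sum formula for distinct weights) are all correct.

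However, the quantitative heart of your argument has a genuine gap, and as sketched it fails. Test it on $(d,e)=(1,3)$: the positive weights are $(4,2,2)$, $l=3$, $\mu=(0,0,0)$, so $s_\mu/s_\delta = 1/\bigl((4+2)(4+2)(2+2)\bigr)=1/144$, while the target bound is $1/(2\cdot 4\cdot 3\cdot 2)=1/48$. In your partial-fraction sum the term with $a_m=n=4$ has magnitude
\[
\frac{n^{2l-5}}{\prod_{k\neq m}\lvert n^2-a_k^2\rvert}=\frac{4}{12\cdot 12}=\frac{1}{36},
\]
which already \emph{exceeds} the target $1/48$; the true value $1/144$ is reached only because the remaining terms cancel most of this one. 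This is not an artifact of the small case: since $n^2-a_k^2\leq n^2$ for every $k$, the $a_m=n$ term is always at least $n^{2l-5}/n^{2l-2}=1/n^3 > 1/(2n(n-1)(n-2))$. So your claims that this term "produces precisely the target" and that the other terms' net contribution "does not exceed the $m=1$ term" cannot both close the argument — even granting the second claim, you would get a bound of roughly $2/n^3$, which is weaker than what is asserted, and in fact the inequality only holds because of cancellation that you have not controlled. The perturbation argument for the repeated weights is likewise only asserted. To salvage this route you would need a genuinely different estimate of the full alternating sum (e.g., via divided differences), which is precisely the delicate analysis the paper's monotone tableau-injection argument is designed to avoid.
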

\begin{proof}
We put $\alpha := (l-3,l-3,l-3,l-4,l-5, \ldots , 1)$ and $\delta := (l-1, l-2 , \ldots , 2, 1)$. Let $\Phi_k : \fct{SST}_l (\alpha ) \to \fct{SST}_l (\delta )$ for $k = 1,2$ be the map such that for any $f \in \fct{SST}_{l}(\alpha )$
\[ \Phi_k(f) (i,j) := \begin{cases} f(i,j) & ((i,j) \in T(\alpha)) \\ l-1 & ((i,j) = (1,l-2)) \\ l & ((i,j)=(2,l-2)) \\ l-1 & ((i,j) = (1,l-1) ,\ k=1) \\ l & ((i,j)=(1,l-1),\ k=2) \end{cases}. \]
The maps $\Phi_1$ and $\Phi_2$ are both injective and the images are disjoint. 
Therefore by Theorem \ref{kostka} we have
\begin{align} s_{\delta}(x_1 ,\ldots , x_l) =& (x_{l-1} x_l^2 + x_{l-1}^2 x_l) s_{\alpha}(x_1 , \ldots , x_l) \nonumber\\ 
& + (\text{polynomial with nonnegative coefficients}). \label{schqr} \end{align}
 The three largest among the weights of the representation $V_d \otimes V_e$ are $(n-2,n-2,n)$ and other weights are smaller than $n-2$.
By substituting the weights into (\ref{schqr}), we obtain
\[ s_\delta ( \nu , n-2 , n-2 , n) \geq 2(n-2)(n-1)n \cdot s_\alpha ( \nu ,n-2,n-2,n), \]
where we put the sequence of positive weights smaller than $n-2$ by $\nu$.
By Theorem \ref{chhs}, we have 
\begin{align*} \fct{Vol}(I(V_d \otimes V_e)) &= \gcd(2,n)  \frac{s_\alpha ( \nu , n-2,n-2,n)}{ s_\delta ( \nu , n-2,n-2,n)} \\
&\leq \gcd(2,n) \frac{1}{2(n-2)(n-1)n}.
\end{align*}
\end{proof}

\begin{thm}
Let $p$ be a prime number. If the first and the $p$-th multiplier map to the image
\[\Lambda^\circ_p := \orbmult_{1,(d,e)} \times \orbmult_{p,(d,e)} : \dyn_{d,e} \ratmap \Lambda (\dyn_{d,e}) \subset \proj (D_{d+e}) \times \proj (D_{M}) \]
is finite, then its degree is at most \[ \frac{\gcd(n,2) N^{de+n-3} (n-3)! (de-3)! }{2n \cdot (de+n-3)!}, \]
where
\[ N := 2(d+e-1) + \frac{2( (d^p-1)(d^p-d)-(e^p-1)(e^p-e))}{p(d-e)}. \]
\end{thm}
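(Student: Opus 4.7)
The plan is to apply the volume inequality of Proposition~\ref{Voldeg} to a carefully chosen inclusion $B \hookrightarrow A$, where $A := I(V_d \otimes V_e)$ is the $\slt$-invariant ring whose $\fct{Proj}$ is $\dyn_{d,e}$, and $B$ is a graded subring carrying the function field of the image of $\Lambda^\circ_p$.

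First I would identify $B$ concretely. By Proposition~\ref{linsys}, $\orbmult_{1,(d,e)}$ is given by a linear system in $A$ of degree $d_1 := 2(n-1)$ with $n+1$ homogeneous generators $\phi_0, \ldots, \phi_n \in A_{d_1}$, and $\orbmult_{p,(d,e)}$ by a linear system of degree $d_2 := \frac{2((d^p-1)\nu_p(d)-(e^p-1)\nu_p(e))}{p(d-e)}$ with $M/p + 1$ generators $\psi_0, \ldots, \psi_{M/p} \in A_{d_2}$, where $M := \nu_p(d) + \nu_p(e)$ and $d_1 + d_2 = N$. Composing $\Lambda^\circ_p$ with the Segre embedding of the target product of projective spaces shows that the combined pullback is generated by the bilinear products $\phi_i\psi_k \in A_N$. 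Setting $B := k[\phi_i\psi_k : 0 \leq i \leq n,\ 0 \leq k \leq M/p] \subset A$ produces a graded subalgebra generated in single degree $N$ (so $s_B = N$); under the generic-finiteness hypothesis $B$ has Krull dimension $r := de + n - 2$ (equal to that of $A$) with $KP(B) = K(\Lambda^\circ_p(\dyn_{d,e}))$, whence $\deg \Lambda^\circ_p = [KP(A) : KP(B)]$.

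Next I would assemble the quantities in Proposition~\ref{Voldeg}. Since every weight of the $\slt$-representation $V_d \otimes V_e$ has parity $n \bmod 2$, the invariant ring $A$ is concentrated in degrees of fixed parity and $s_A = 2/\gcd(n,2)$; combining this with the upper bound of Lemma~\ref{schureval} gives $s_A\,\fct{Vol}(A) \leq 1/((n-2)(n-1)n)$. Proposition~\ref{Voldeg} then yields
\[ \deg \Lambda^\circ_p \;\leq\; \frac{s_A\,\fct{Vol}(A)}{s_B\,\fct{Vol}(B)} \;\leq\; \frac{1}{(n-2)(n-1)\,n\,N\,\fct{Vol}(B)}, \]
so the remaining task is a lower bound for $\fct{Vol}(B)$.

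This lower bound is the technical heart of the argument. Via Corollary~\ref{dvcor} it is equivalent to a lower bound on $\fct{Vol}(B^{[N]}) = N^r\fct{Vol}(B)$, which is the degree of the Segre image of $\Lambda^\circ_p(\dyn_{d,e})$. The naive estimate $\fct{Vol}(B^{[N]}) \geq 1$ is insufficient. Instead I would analyse $B$ through its bi-graded Hilbert function, declaring $\phi_i$ to have bi-weight $(1,0)$ and $\psi_k$ to have bi-weight $(0,1)$, and compute the asymptotics of $\dim B_{mN} = \sum_{ad_1+bd_2 = mN}\dim B_{(a,b)}$ by Riemann-sum approximation along the segment $ad_1 + bd_2 = mN$. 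A Beta-function identity then converts the bi-graded leading coefficients into a singly-graded leading coefficient, with the factor $(n-3)!(de-3)!/(r-1)!$ emerging from the shape of the bi-Krull decomposition of $B$ dictated by the generator counts $n+1$ and $M/p + 1$. The hard part will be making this bi-graded/Riemann-sum analysis rigorous and extracting the sharp combinatorial constant; the remaining steps are formal manipulations that assemble into the claimed estimate.
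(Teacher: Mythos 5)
Your skeleton matches the paper's: bound $\deg\Lambda^\circ_p$ by $[KP(A):KP(B)]$ for a Segre-type graded subalgebra $B\subset A=I(V_d\otimes V_e)$, control $\fct{Vol}(A)$ via Lemma \ref{schureval}, and conclude with Proposition \ref{Voldeg}. But the decisive step is handled differently, and in your version it has a genuine gap. The paper does \emph{not} work with the full Segre image $k[\phi_i\psi_k]$: it first uses the finiteness hypothesis together with the algebraic-independence result of \cite{gorbovickis2015rat_mult_alg_indep} to select $n$ elements $f_1,\dots,f_n$ of $L_1$ and $de-2$ elements $g_1,\dots,g_{de-2}$ of $L_p$ that are algebraically independent, and only then sets $B:=k[f_ig_j]$. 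Because the chosen elements are algebraically independent, $\dim_k B_{iN}=\binom{i+n-1}{n-1}\binom{i+de-3}{de-3}$ holds exactly, so $\fct{Vol}(B)=\frac{(de+n-3)!}{N^{de+n-3}(de-3)!\,(n-1)!}$ is \emph{computed} rather than estimated, and combining with Lemma \ref{schureval} yields the factor $(n-3)!\,(de-3)!$. Passing to this smaller $B$ is legitimate since it can only increase $[KP(A):KP(B)]$, hence only weaken the upper bound.

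Your route keeps all generators $\phi_i\psi_k$ and defers everything to a lower bound for $\fct{Vol}(B)$, i.e.\ for the degree of the Segre image of $\Lambda^\circ_p(\dyn_{d,e})$. This is where the proposal breaks down: the constant cannot be ``dictated by the generator counts $n+1$ and $M/p+1$''. A free bigraded count with those generator numbers would give a leading coefficient involving $1/(n!\,(M/p)!)$, and $M/p$ grows like $d^p/p$, nowhere near $de-3$; the correct constant is governed by the bidegree decomposition of the image cycle in $\proj^{n}\times\proj^{M/p}$, that is, by knowing that the dimension $de+n-3$ splits as $(n-1)+(de-2)$ between the two factors with the corresponding bidegree component nonzero. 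Proving that nonvanishing is exactly the algebraic-independence input you omit, not a Riemann-sum or Beta-function computation. So either import that input and pass to the paper's smaller $B$ (after which the Hilbert series is an explicit product of binomials and Corollary \ref{dvcor} finishes immediately), or accept that your ``technical heart'' is the missing theorem rather than a routine asymptotic estimate.
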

\begin{proof}
We put $A := I(V_d \otimes V_e)$. Let $L_1$ and $L_p$ be the linear systems which gives $\orbmult_{1}$ and $\orbmult_p$. By Proposition \ref{linsys}, we can take the linear systems in $A$ such that whose degrees are respectively at most 
\[ 2(d+e-1) \text{ and } \frac{2( (d^p-1)(d^p-d)-(e^p-1)(e^p-e))}{p(d-e)}. \]
By assumption and algebraic independence of discriminant-resultant \cite{gorbovickis2015rat_mult_alg_indep}, we can take $n$ elements $f_1 , \ldots , f_n$ in $L_1$ and $\dim \dyn_{d,e} - (n-1) = de-2$ elements $g_1, \ldots , g_{de-2}$ in $L_m$ to be algebraically independent.

Let $k[L_1\otimes L_p] \to A$ be the morphism of graded $k$-algebras defined by $L_1 \otimes L_p \ni f \otimes g \mapsto fg \in A$ and $B_{(1,p)}$ be its image. Then we have the degree of the morphism $\Lambda$ is the extension degree $[KP(A) : KP(B_{(1,p)} )]$ of rational function fields. 

The graded subalgebra $B_{(1,p)} \supset B := k[f_ig_j \mid 1 \leq i \leq n,\ 1 \leq j \leq de-2]$ of $A$ has the Hilbert series
\[ H_B(t) = \sum_{i = 0}^{\infty} \binom{i+n-1}{n-1} \binom{i+de-3}{de-3} t^{iN}. \]
By Corollary \ref{dvcor}, we have
\begin{align*}
\fct{Vol}(B) = \frac{1}{N^{de+n-3}} \cdot \frac{(de+n-3)!}{(de-3)! (n-1)!}.
\end{align*}
Therefore we have
\begin{align} 
[KP(A) : KP(B_{(1,p)})] &\leq [KP(A) : KP(B)] \nonumber \\
& \leq \frac{\fct{Vol}(A)}{ \fct{Vol}(B)} \nonumber \\
& \leq \frac{\gcd(n,2)}{2n(n-1)(n-2)} \cdot \frac{N^{de+n-3} (n-1)! (de-3)! }{(de+n-3)!} \label{eval} \\
&=\frac{\gcd(n,2) N^{de+n-3} (n-3)! (de-3)! }{2n \cdot (de+n-3)!} \nonumber
\end{align}
from Proposition \ref{Voldeg}.
\end{proof}
\begin{rem}
    By skipping Lemma \ref{schureval}, we can use 
    \begin{equation}
        \frac{s_{\delta}(\text{positive weights of }V_d\otimes V_e)}{s_{\alpha}(\text{positive weights of }V_d\otimes V_e)} \label{svdve}
    \end{equation} 
    instead of $1/2n(n-1)(n-2)$ in \eqref{eval}. Experimentally \eqref{svdve} looks like of order $O(n^{-(4+O(1))})$, but this difference of orders may be very small comparing to $N^{de+d+e-3} $.
\end{rem}
\begin{rem}\label{dimsnd}
From number of periodic orbit, and Holomorphic Lefschetz formula (\cite{illusie1977sga5}, \cite{gotou2023dyn_corr_moduli}) for $C$ and $\Psi_2 C$, the dimension of fiber of $\Lambda_2^\circ$ is at least
\begin{align}
& (d+e-1) + \frac{(d^2-d) + (e^2-e)}{2}-1 - (de+d+e-3) \nonumber \\ 
& = \frac{(d-e)^2 -(d+e) + 2}{2}.
\end{align}
In particular, the map $\Lambda_2^\circ$ can be generically finite to its image only if $(d-e)^2 \geq d+e-2$.
\end{rem}

\section{Birationality of the Second Multiplier Map of Cubic Maps} \label{sc:cubbirat}
In this section, we give two proofs of the following theorem.
\begin{thm}\label{cubbirat} The multiplier map $\Lambda_{2,(1,3)}$ is birational to its image. 
\end{thm}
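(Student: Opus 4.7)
Since $\dim \dyn_{1,3} = 2 \cdot 3 - 2 = 4$ and $\Lambda_{2,(1,3)}$ is already known to be generically finite onto its image, it suffices to exhibit a rational inverse on a dense open subset of the image. The natural approach is to set up a 4-parameter normal form for a generic cubic rational map and then verify that the values of the fixed-point and 2-periodic multipliers uniquely determine the parameters.

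For the normal form, I would use the $\pglt$-action to place three of the four fixed points of a generic $\phi \in \dyn_{1,3}$ at $0$, $1$, $\infty$. A cubic rational map fixing these three points is determined by four residual parameters, for example the coefficients of the numerator $p$ and denominator $q$ of $\phi(x) = p(x)/q(x)$ subject to the three constraints $p(z) - z q(z) = 0$ for $z \in \{0,1,\infty\}$. Using \eqref{nmultptomega}, I would then compute the four fixed-point multipliers together with the multipliers of the three orbits of formal $2$-periodic points as explicit rational functions of these parameters, obtaining an explicit model $\af^4 \ratmap \af^{N}$ that factors through $\Lambda_{2,(1,3)}$.

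The core of the argument is to show that the generic fiber of this factored map consists of a single point once the residual $S_3$-action permuting $\{0,1,\infty\}$ is properly taken into account. I would proceed by successive elimination: three of the four fixed-point multipliers are directly visible in the normal form, recovering three parameters modulo $S_3$; the remaining fixed-point multiplier together with the multipliers of the $2$-periodic orbits then determines the fourth parameter; finally one checks that the $S_3$-orbit of a generic tuple of multipliers lifts to a single $\pglt$-conjugacy class on $\dyn_{1,3}$. In the language of Section~\ref{sc_volalg}, this amounts to verifying that the relevant extension of rational function fields has degree exactly one, in contrast to the spurious factor of $12$ reported in \cite{hutz-tepper2013multiplier}.

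The main obstacle will be the symbolic complexity. By Proposition~\ref{linsys} the linear system defining $\orbmult_{2,(1,3)}$ on $\dyn_{1,3}$ is of degree $24$, so the multiplier polynomials in the four normal-form parameters are very heavy, and a brute Gr\"obner basis elimination is at the edge of feasibility. The central conceptual pitfall—and presumably the origin of the original miscount—is to correctly identify the interplay between the permutation symmetries introduced by the choice of normal form and the symmetries among the three $2$-periodic orbits, so that fibers which actually coincide in $\dyn_{1,3}$ are not counted multiple times.
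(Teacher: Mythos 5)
Your plan follows the same general route as the paper's first proof (Subsection~\ref{ss_ffr}, after \cite{hutz-tepper2013multiplier}): normalize so that three fixed points sit at $0,1,\infty$, express the multipliers in the residual parameters, and show the generic fiber is a single point. But as written it has a genuine gap: the entire content of the theorem is the final computational verification, and you defer exactly that step to a four-parameter Gr\"obner elimination that you yourself flag as being at the edge of feasibility. A proposal that says ``then one checks the fiber is a single point'' does not prove the statement --- indeed \cite{hutz-tepper2013multiplier} carried out essentially this check and got the answer $12$ rather than $1$. You are missing the two devices that make the computation both tractable and correct. First, one does not eliminate in all four parameters at once: one fixes a (generic) value $P$ of the fixed-point multipliers, so that $\lambda_1^{-1}(P)$ is a rational curve $\phi_{P,a}$ in one parameter $a$, and then solves the two equations \eqref{sndper}--\eqref{sndmult} in the two variables $(a,b)$ (of degrees $9$ and $16$), discarding the non-reduced solutions violating \eqref{notfixed} and identifying $(a,b)$ with $(a,\phi_{P,a}(b))$; this is done over a finite field to make it fast. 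Second, and this is where the original miscount actually occurred, that computation still leaves \emph{six} maps $\phi_{P,a}$ sharing the fixed-point spectrum $P$ and one $2$-periodic multiplier $\lambda$; birationality is obtained only by the further check that the \emph{remaining} $2$-periodic multipliers of these six maps are pairwise distinct, so that the full spectrum separates them. Your diagnosis that the pitfall is an $S_3$/$\pglt$ symmetry bookkeeping issue is off target: the paper works with unordered spectra in $\fct{Sym}_{n}\pone\simeq\proj^{n}$, so that symmetry is already quotiented out; the issue is the separation of the six candidates by the other periodic multipliers.

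Two smaller points. Your opening appeal to ``$\Lambda_{2,(1,3)}$ is already known to be generically finite'' should be sourced carefully: Gorbovickis's finiteness result covers $\Lambda_n$ for $n\ge 3$, and for $\Lambda_2$ at $d=3$ the generic finiteness is itself part of what the explicit fiber computation establishes (the dimension count of Remark~\ref{dimsnd} only says finiteness is not excluded). Separately, note that the paper also gives a second, genuinely different proof (Subsection~\ref{ss_dc}) that avoids fiber-by-fiber computation altogether: it writes the linear systems of $\lambda_1$ and $\lambda_2^\circ$ inside the invariant ring $I(V_1\otimes V_3)$ of \cite{west2015moduli}, enlarges the subalgebra generated by them to $k[\sigma_i,\phi,\psi]$ using some fortuitous factorizations, and bounds the degree by the ratio of volumes $9/5<2$ via Proposition~\ref{Voldeg}. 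If you want a route that scales beyond $(1,3)$, that is the one to study.
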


\subsection{Finite field reduction}\label{ss_ffr}
The counting of the degree of
\[\Lambda_{2} := \lambda_{1,(1,3)} \times \lambda_{2,(1,3)} : \dyn_{1,3} \to \proj^{3} \times \proj^{9} \]
to its image is done in \cite{hutz-tepper2013multiplier} by the following method. First, we fix a point $P \in \lambda_1(\dyn_{1,3})$ and consider the inverse image $l := \lambda_{1}^{-1} (P)$. An explicit morphism $\phi_P : \pone \to \ccor_{1,3}$ such that $l$ is birational to the image of $\pone \xrightarrow{\phi_P } \ccor_{1,3} \xrightarrow{\pi } \dyn_{1,3}$ is given in \cite{hutz-tepper2013multiplier}. We denote the endomorphism on $\pone$ indicated by the point $\phi_P (a)$ by $\phi_{P,a}$. Then we will solve the equations in the two variables $a$ and $b$,
\begin{align}
\phi_{P,a}^2(b) & = b \label{sndper} \\
(\phi_{P,a}^2 )'(b) & = \lambda \label{sndmult} \\
\phi_{P,a}(b) & \neq b \label{notfixed}
\end{align}
for a given $P$ and $\lambda$. The equations (\ref{sndper}) and (\ref{sndmult}) are of degree 9 and 16 respectively, in variables $a$ and $b$.
By a MAGMA computation over a finite field, we obtain the solutions as a 0-dimensional closed subscheme $Z$ of degree 144 on $\proj^2$. Under a base-change to the algebraically closed field, the support of $Z$ consists of 18 points. Six of them are non-reduced and does not satisfy the inequality (\ref{notfixed}). Remaining 12 points satisfies (\ref{notfixed}), moreover the MAGMA computation shows that they are reduced.

That was the computation done in \cite{hutz-tepper2013multiplier}. We proceed computation from here. At first, we note that for a solution $(a,b)$ of \eqref{sndper},\eqref{sndmult} and \eqref{notfixed}, the points $(a , \phi_{P,a}(b))$ is also a solution of equation. Therefore, we obtain 6 rational maps $\phi_{P,a}$ with periodic points of period two. For a value $\lambda$, the solutions are given by explicit values of $(a,b)$. For the 6 rational maps, we compute other multipliers of periodic points of period 2. Then we obtain that the values of other multipliers are mutually different, so we obtain that $\Lambda_{3,2}$ is injective.

\subsection{Direct computation with Graded-decomposition and interpolation}\label{ss_dc}
In this section, we show Theorem \ref{cubbirat} by computing the full formula of the second multiplier map $\Lambda^\circ_{2,(1,3)}$.

In the computational process of the explicit expression, we use the information that the resulting polynomials are $\slt$-invariant. Our algorithm (Subsection \ref{ss_intp_alg}) of graded-piece-wise computation is applied for limited cases, but this method makes the computation much faster.

We refer the expression of generators of the invariant ring $A := I(V_1 \otimes V_3) = I(V_4 \oplus V_2)$ given in \cite{west2015moduli}. The invariant ring is given by 
\[ A \simeq k[d,i,j,a,b,c] / r, \]
where $d,i,j,a,b,c$ are generators of degree respectively $2,2,3,3,4,6$ and $r$ is the relation 
\[ 2 c^2=\frac{1}{54} d^3 i^3-\frac{1}{9} d^3 j^2-\frac{1}{12} d i^2 a^2-\frac{1}{3} j a^3+d j a b+\frac{1}{2} i a^2 b-\frac{1}{2} d i b^2-b^3. \]
Let $f_4(z)$ and $f_2(z)$ be the fundamental covariants of $V_4$ and $V_2$ respectively (denoted by $\mathbf{f}$ and $\mathbf{g}$ in \cite{west2015moduli} respectively). By \cite[Remark 7.6]{gotou2023dyn_corr_moduli}, the first multiplier map $\lambda_1$ is given by the linear system consists of discriminant-resultants (named in \cite{gotou2022brac_disc_res})
\[ \sigma_r := DR_{4,r}(f_4,f_2)\ (r = 0,2,3,4) \text{ of degree }(6-r,r), \]
which are defined by
\[ \sum_{r = 0}^4 DR_{4,r}(f_4,f_2) t^r = \fct{res}_{z}(f_4, \partial_zf_4 + zf_2t). \]
We put
\[ \Sigma_{\pm} := \sigma_0 + \sigma_2 \pm \sigma_3 + \sigma_4. \]
By Proposition \ref{linsys}, we can have a linear system of $\lambda_2^\circ$ of degree at most 24. Let 
\[ L_2 (t) := \fct{res}_{z}(\Pi_2^*f(z), d_z \Omega^0(\Psi_2 f) (z) + t \cdot \Omega^1 (\Psi_2f) (z)). \]
In this case, $L_2(t)$ has a divisor $\Sigma_-^4$ and we can take a square root of $L_2(t)/\Sigma_-^4$. We put the square root of $L_2(t)$ as
\[ \sqrt{L_2(t)} =: \delta (t) =: \delta_0 + \delta_1t + \delta_2t^2 + \delta_3t^3. \]
The forms $\delta_i$'s are invariants in $A$ of degree $(48-4\cdot 6)/2 = 12$. We have
\begin{align*}
    \delta_0^2 \cdot \Sigma_-^4 & = \fct{res}_z(\Pi_2^*f(z) , d_z \Omega^0(\Psi_2f)(z)) \\
    & =\Delta_z(\Pi_2^*f(z)) \cdot \fct{res}_z(\Pi_2^*f(z), \Pi_1^*f(z))
\end{align*}
and by a direct computation we obtain
\begin{align*}
    \Delta_z(\Pi_2^*f(z)) & = \Sigma_+ \cdot \Sigma_-^2 \cdot \phi^2 \text{ and }\\
    \fct{res}_z(\Pi_2^*f(z), \Pi_1^*f(z)) & = \Sigma_+ \cdot \Sigma_-^2 \text{, where}\\
    \phi & =2^{-27}\cdot (d^3 - 12d^2i + 48di^2 - 64i^3 + 384j^2 + 288ja + 54a^2), 
\end{align*}
so we have $\delta_0 = \pm \Sigma_+ \cdot \phi$. By fixing the sign to be $+$, we can compute $\delta(t)$ and obtain 
\begin{align}
    \delta_1  = -\frac{1}{2}  (-9\sigma_0 + \sigma_2 + 6\sigma_3 + 11\sigma_4 )\cdot \phi, 
    \delta_2, \delta_3  \in A_{12}. \label{sgncpt}
\end{align}

Here we remark that
\[ K_2 := K(\Lambda_2(\dyn_{1,3})) = k\left( \frac{\sigma_i}{\sigma_j} , \frac{\delta_i}{\delta_j} \right) \subset KP(A). \]
So we start from the algebra
\[ B_1 := k[\sigma_i \cdot \phi (i = 0,2,3,4), \delta_2 ,\delta_3 ] (\subset A)\]
to larger sub-graded-algebras of $A$ with keeping the condition $KP(B_i) = K_2$.
By seeking factorizable linear combinations of the generators of $B_1$, we find
\begin{align*}
& \delta_2 + \phi (-10 \sigma_0 + \sigma_2 + 10 \sigma_3 ) = \Sigma_- \cdot \psi, \\
&\text{where }\psi = \frac{1}{2^{31}}(-26048i^3 + 9936i^2d - 884id^2 + 7d^3  \\
& \hspace{80pt}- 102912j^2 - 38784ja - 72a^2 - 9600ib + 2400db)\text{ and} \\
& (\delta_2 + \delta_3) - \frac{1}{2}(11\sigma_0 + \sigma_2 - 4 \sigma_3 - 9 \sigma_4)\phi
=\frac{5^3}{2^{19}} (\Sigma_- + \sqrt{2} \sigma_3)(\Sigma_- - \sqrt{2} \sigma_3).
\end{align*}
Therefore, we can replace $B_1$ by 
\[ B_2 := k[ \sigma_i, \phi, \psi ]. \]
By computing the elimination ideal of generators of $B_2$, we obtain that the only relation among the generators is only a relation of degree $60$ (degree $10$ polynomial of $\sigma_i,\phi,\psi$), so we can see that the Hilbert series of $B_2$ is given by
\[ H_{B_2}(t) = \frac{1-t^{60}}{(1-t^6)^6}. \]
Here we have
\begin{align*}
 H_A(t) & = \frac{1+t^6}{(1-t^2)^2(1-t^3)^2 (1-t^4)}.
\end{align*}
We recall that for any graded algebra $C$, $C^{[n]} := \bigoplus_{i \geq 0} C_{in}$ with $\deg C_{in} = i$.
By a direct computation, we have
\begin{align*}
 H_{A^{[6]}}(t) & = \frac{(1+t)(1+5t + 9t^2 + 4t^3)}{(1-t)^4(1-t^2)} \text{ and }H_{B_2^{[6]}}(t) = \frac{1-t^{10}}{(1-t)^6}.
\end{align*}
By Proposition \ref{Voldeg}, we have 
\[ \deg (\Lambda_2 ) = [KP(A^{[6]}) : KP( B_2^{[6]} )] \leq \frac{\fct{Vol}(A^{[6]}) }{\fct{Vol}(B_2^{[6]})} = \frac{9}{5}, \]
this shows that $\deg (\Lambda_2) = 1$.

\begin{rem}\label{simprel}
    Throughout this ad hoc proof, there are three steps completed unexpectedly easily. The first is that there are factorizable linear combinations including $\delta_2$ and $\delta_3$. The second is that a linear combination moreover belongs to $k[\sigma_i ]_{12}$. The third is that the relation among $\sigma_i,\phi , \psi$ was of degree 60. Because of this small degree (the expected degree from the Hilbert series is 108), we can obtain the result in a few minutes by simply computing the elimination ideal. Moreover, this degree is also the lower bound to determine the extension degree to be $1$.
\end{rem}
\appendix

\section[A]{Appendix: Programs}
\subsection{Programs used in Subsection \ref{ss_ffr}} \label{ss_ffr_pr}
The MAGMA program run in \cite{hutz-tepper2013multiplier} were the following.
\begin{itemize}\item[{}]\texttt{l0:=3;\\
l1:=2;\\
l8:=4;\\
lB:=-5;\\
R<a,B,z>:=ProjectiveSpace(GF(101),2);\\
function h(P,d)
Q:=0;\\
for i:=0 to d do
for j:=0 to d-i do
Q:=Q+ Term(Term(P,a,i),B,j)*z\textasciicircum  (d-i-j);\\
end for;\\
end for;\\
return(Q);\\
end function;\\
function f(x,y)
return((((l0 - 1)*l1 + (-l0 + 1))*x\textasciicircum 3 + ((a*l0*l1 + (-l0 + (-a + 1)))*l8 +
(((-a - 1)*l0 +1)*l1 + (2*l0 + (a - 2))))*x\textasciicircum 2*y + ((-a*l0*l1 + a*l0)*l8 +
(a*l0*l1 - a*l0))*x*y\textasciicircum 2));\\
end function;\\
function g(x,y)
return((((l0 - 1)*l1 + (-l0 + 1))*l8*x\textasciicircum 2*y + (((-l0 + (a + 1))*l1 +
(a*l0 - 2*a))*l8 + (-a*l1 + ((-a + 1)*l0 +(2*a - 1))))*x*y\textasciicircum 2 +
((-a*l1 + a)*l8 + (a*l1 - a))*y\textasciicircum 3));\\
end function;\\
f1:=f(f(B,1),g(B,1));\\
g1:=g(f(B,1),g(B,1));\\
F1:=f1-B*g1;\\
F2:=g1*Derivative(f1,B) - f1*Derivative(g1,B) - lB*g1*g1;\\
G1:=h(F1,9);\\
G2:=h(F2,16);\\
C:=Scheme(R,[G1,G2]);\\
D:=ReducedSubscheme(C);\\
Degree(D);}
\end{itemize}
After running this computation, we firstly compute the coordinates of the points of $ D$.
\begin{itemize}
\item[{}]\texttt{PointsOverSplittingField(D); \\
Output: \\
\{@ (0 : 0 : 1), (0 : 4 : 1), (1 : 1 : 1), (1 : 49 : 1), (4 : 93*\$.1\textasciicircum 7 + 44*\$.1\textasciicircum 6
    + 24*\$.1\textasciicircum 5 + 23*\$.1\textasciicircum 4 + 48*\$.1\textasciicircum 3 + 26*\$.1\textasciicircum 2 + 65*\$.1 + 90 : 1), (4 : 8*\$.1\textasciicircum 7
    + 57*\$.1\textasciicircum 6 + 77*\$.1\textasciicircum 5 + 78*\$.1\textasciicircum 4 + 53*\$.1\textasciicircum 3 + 75*\$.1\textasciicircum 2 + 36*\$.1 + 79 : 1), 
(96 : 27 : 1), (96 : 6 : 1), (18*\$.1\textasciicircum 7 + 50*\$.1\textasciicircum 6 + 68*\$.1\textasciicircum 5 + 24*\$.1\textasciicircum 4 + 
    59*\$.1\textasciicircum 3 + 22*\$.1\textasciicircum 2 + 93*\$.1 + 93 : 55*\$.1\textasciicircum 7 + 55*\$.1\textasciicircum 6 + 80*\$.1\textasciicircum 5 + 
    72*\$.1\textasciicircum 4 + 5*\$.1\textasciicircum 3 + 89*\$.1\textasciicircum 2 + 52*\$.1 + 10 : 1), (18*\$.1\textasciicircum 7 + 50*\$.1\textasciicircum 6 + 
    68*\$.1\textasciicircum 5 + 24*\$.1\textasciicircum 4 + 59*\$.1\textasciicircum 3 + 22*\$.1\textasciicircum 2 + 93*\$.1 + 93 : 15*\$.1\textasciicircum 7 + 
    56*\$.1\textasciicircum 6 + 83*\$.1\textasciicircum 5 + 3*\$.1\textasciicircum 4 + 62*\$.1\textasciicircum 3 + 95*\$.1\textasciicircum 2 + 21*\$.1 + 80 : 1), 
(14*\$.1\textasciicircum 7 + 52*\$.1\textasciicircum 6 + 48*\$.1\textasciicircum 5 + 18*\$.1\textasciicircum 4 + 53*\$.1\textasciicircum 3 + 62*\$.1\textasciicircum 2 + 42*\$.1 + 50 :
70*\$.1\textasciicircum 7 + 7*\$.1\textasciicircum 6 + 3*\$.1\textasciicircum 5 + 27*\$.1\textasciicircum 4 + 47*\$.1\textasciicircum 3 + 32*\$.1\textasciicircum 2 + 64*\$.1 + 28 : 
    1), (14*\$.1\textasciicircum 7 + 52*\$.1\textasciicircum 6 + 48*\$.1\textasciicircum 5 + 18*\$.1\textasciicircum 4 + 53*\$.1\textasciicircum 3 + 62*\$.1\textasciicircum 2 + 
    42*\$.1 + 50 : 88*\$.1\textasciicircum 7 + 85*\$.1\textasciicircum 6 + 73*\$.1\textasciicircum 5 + 50*\$.1\textasciicircum 4 + 65*\$.1\textasciicircum 3 + 
    59*\$.1\textasciicircum 2 + 96*\$.1 + 70 : 1), (75*\$.1\textasciicircum 7 + 95*\$.1\textasciicircum 6 + 57*\$.1\textasciicircum 5 + 100*\$.1\textasciicircum 4 + 
    90*\$.1\textasciicircum 3 + 4*\$.1\textasciicircum 2 + 73*\$.1 + 55 : 5*\$.1\textasciicircum 7 + 72*\$.1\textasciicircum 6 + 70*\$.1\textasciicircum 5 + 39*\$.1\textasciicircum 4 
    + 32*\$.1\textasciicircum 3 + 31*\$.1\textasciicircum 2 + 74*\$.1 + 26 : 1), (75*\$.1\textasciicircum 7 + 95*\$.1\textasciicircum 6 + 57*\$.1\textasciicircum 5 + 
    100*\$.1\textasciicircum 4 + 90*\$.1\textasciicircum 3 + 4*\$.1\textasciicircum 2 + 73*\$.1 + 55 : 86*\$.1\textasciicircum 7 + 93*\$.1\textasciicircum 6 + 
    92*\$.1\textasciicircum 5 + 67*\$.1\textasciicircum 4 + 46*\$.1\textasciicircum 3 + 95*\$.1\textasciicircum 2 + 22*\$.1 + 78 : 1), (95*\$.1\textasciicircum 7 + 
    5*\$.1\textasciicircum 6 + 29*\$.1\textasciicircum 5 + 60*\$.1\textasciicircum 4 + 13*\$.1\textasciicircum 2 + 95*\$.1 + 87 : 66*\$.1\textasciicircum 7 + 33*\$.1\textasciicircum 6
    + 62*\$.1\textasciicircum 5 + 50*\$.1\textasciicircum 4 + 83*\$.1\textasciicircum 3 + 87*\$.1\textasciicircum 2 + 29*\$.1 + 25 : 1), (95*\$.1\textasciicircum 7 + 
    5*\$.1\textasciicircum 6 + 29*\$.1\textasciicircum 5 + 60*\$.1\textasciicircum 4 + 13*\$.1\textasciicircum 2 + 95*\$.1 + 87 : 19*\$.1\textasciicircum 7 + 3*\$.1\textasciicircum 6 
    + 42*\$.1\textasciicircum 5 + 96*\$.1\textasciicircum 4 + 64*\$.1\textasciicircum 3 + 17*\$.1\textasciicircum 2 + 46*\$.1 + 2 : 1), (47 : 1 : 0),
(1 : 0 : 0) @\} \\
Finite field of size 101\textasciicircum 8 }
\end{itemize}
These are the coordinates $(a:b:z)$ of the solutions of \eqref{sndmult} and \eqref{sndper} on $\proj^2$, with the homogenizing variable $z$. The parameters $a$ of 12 reduced points, consisted of 6 values as expected are the following.
\begin{itemize}\item[{}]\texttt{\{4,96,18*\$.1\textasciicircum7 + 50*\$.1\textasciicircum6 + 68*\$.1\textasciicircum5 + 24*\$.1\textasciicircum4 + 59*\$.1\textasciicircum3 + 22*\$.1\textasciicircum2 + 93*\$.1 + 93,
14*\$.1\textasciicircum7 + 52*\$.1\textasciicircum6 + 48*\$.1\textasciicircum5 + 18*\$.1\textasciicircum4 + 53*\$.1\textasciicircum3 + 62*\$.1\textasciicircum2 + 42*\$.1 + 50,
75*\$.1\textasciicircum7 + 95*\$.1\textasciicircum6 + 57*\$.1\textasciicircum5 + 100*\$.1\textasciicircum4 + 90*\$.1\textasciicircum3 + 4*\$.1\textasciicircum2 + 73*\$.1 + 55,
95*\$.1\textasciicircum7 + 5*\$.1\textasciicircum6 + 29*\$.1\textasciicircum5 + 60*\$.1\textasciicircum4 + 13*\$.1\textasciicircum2 + 95*\$.1 + 87\},
}
\end{itemize}
The multipliers are given by:
\begin{itemize}
\item[{}] \texttt{l0:=3;\\
l1:=2;\\
l8:=4;\\
lB:=-5;\\
\\
F<w>:= GF(101,8);\\
R<a,B,z>:=PolynomialRing(F,3);\\
\\
function f(x,y)\\
return((((l0 - 1)*l1 + (-l0 + 1))*x\textasciicircum 3 + ((a*l0*l1 + (-l0 + (-a + 1)))*l8 +
(((-a - 1)*l0 +1)*l1 + (2*l0 + (a - 2))))*x\textasciicircum 2*y + ((-a*l0*l1 + a*l0)*l8 +
(a*l0*l1 - a*l0))*x*y\textasciicircum 2));\\
end function;\\
function g(x,y)\\
return((((l0 - 1)*l1 + (-l0 + 1))*l8*x\textasciicircum 2*y + (((-l0 + (a + 1))*l1 +
(a*l0 - 2*a))*l8 + (-a*l1 + ((-a + 1)*l0 +(2*a - 1))))*x*y\textasciicircum 2 +
((-a*l1 + a)*l8 + (a*l1 - a))*y\textasciicircum 3));\\
end function;\\
f2:=f(f(B,1),g(B,1));\\
g2:=g(f(B,1),g(B,1));\\
F2:=f2-B*g2;\\
redF2:=R!(F2/(f(B,1) - B * g(B,1)));\\
dF2:=g2*Derivative(f2,B) - f2*Derivative(g2,B) - z*g2*g2;\\
\\                                        
function mult(c);\\
return(Resultant(Evaluate(redF2,a,c),Evaluate(dF2,a,c),B));\\
end function;\\        
\\
result:\\
> Factorization(mult(4));\\
\symbol{91}
    <z + 5, 2>,
    <z + 50, 2>,
    <z + 90, 2>
]\\
> Factorization(mult(96));\\
\symbol{91}
    <z + 5, 2>,
    <z + 26, 2>,
    <z + 66, 2>
]\\
> Factorization(mult(18*w\textasciicircum7 + 50*w\textasciicircum6 + 68*w\textasciicircum5 + 24*w\textasciicircum4 + 59*w\textasciicircum3 + 22*w\textasciicircum2 + 93*w + 93));\\
\symbol{91}
    <z + 5, 2>,
    <z + 78*w\textasciicircum7 + 93*w\textasciicircum6 + 47*w\textasciicircum5 + 57*w\textasciicircum4 + 23*w\textasciicircum3 + 80*w\textasciicircum2 + 73*w + 52, 2>,
    <z + 70*w\textasciicircum7 + 79*w\textasciicircum6 + 53*w\textasciicircum5 + 6*w\textasciicircum4 + 42*w\textasciicircum3 + 86*w\textasciicircum2 + 96*w + 53, 2>
]\\
> Factorization(mult(14*w\textasciicircum7 + 52*w\textasciicircum6 + 48*w\textasciicircum5 + 18*w\textasciicircum4 + 53*w\textasciicircum3 + 62*w\textasciicircum2 + 42*w + 50));\\ 
\symbol{91}
    <z + 5, 2>,
    <z + 60*w\textasciicircum7 + 98*w\textasciicircum6 + 74*w\textasciicircum4 + 4*w\textasciicircum3 + 24*w\textasciicircum2 + 24*w + 21, 2>,
    <z + 27*w\textasciicircum7 + 14*w\textasciicircum6 + 59*w\textasciicircum5 + 73*w\textasciicircum4 + 55*w\textasciicircum3 + 12*w\textasciicircum2 + 37*w + 11, 2>
]\\
> Factorization(mult(75*w\textasciicircum7 + 95*w\textasciicircum6 + 57*w\textasciicircum5 + 100*w\textasciicircum4 + 90*w\textasciicircum3 + 4*w\textasciicircum2 + 73*w + 55));\\ 
\symbol{91}
    <z + 5, 2>,
    <z + 100*w\textasciicircum7 + 39*w\textasciicircum6 + 25*w\textasciicircum5 + 12*w\textasciicircum4 + 20*w\textasciicircum3 + 99*w\textasciicircum2 + 21*w + 73, 2>,
    <z + 84*w\textasciicircum7 + 84*w\textasciicircum6 + 91*w\textasciicircum5 + 31*w\textasciicircum4 + 44*w\textasciicircum3 + 70*w\textasciicircum2 + 92*w + 13, 2>
]\\
> Factorization(mult(95*w\textasciicircum7 + 5*w\textasciicircum6 + 29*w\textasciicircum5 + 60*w\textasciicircum4 + 13*w\textasciicircum2 + 95*w + 87));\\           
\symbol{91}
    <z + 5, 2>,
    <z + 89*w\textasciicircum7 + 42*w\textasciicircum6 + 58*w\textasciicircum5 + 91*w\textasciicircum4 + 11*w\textasciicircum3 + 22*w\textasciicircum2 + 91*w + 80, 2>,
    <z + 98*w\textasciicircum7 + 56*w\textasciicircum6 + 71*w\textasciicircum5 + 60*w\textasciicircum4 + 3*w\textasciicircum3 + 11*w\textasciicircum2 + 71*w + 81, 2>
]
}
\end{itemize}
This computation shows the other multipliers of period two orbits are mutually different for the six solutions of (\ref{sndper}).

\subsection{An algorithm for Subsection \ref{ss_dc}}\label{ss_intp_alg}

In order to make up block-decomposed interpolation matrix, we used the following algorithm. The program file written by SAGE\cite{sagemath} is attached, or at \cite{gotou2023prog}.

\begin{algorithm}[H]
\caption{Degree-wise random-sampling interpolation method (probabilistic)}
\begin{algorithmic}
\REQUIRE
\STATE Algorithms to compute $g_1,g_2,\ldots , g_\beta$ and $H = h(g_1 ,\ldots , g_\beta )$,\\ 
\STATE the set of monomials $ M = \left\{ \left. \mathbf{y}^{\mathbf{d}} \right| \mathbf{d} \in \{ \mathbf{d}_1 ,\ldots , \mathbf{d}_l \} \right\} $\\
\STATE such that $h = \sum c_\mathbf{d} \mathbf{y}^{\mathbf{d}}\ (c_\mathbf{d} \in \rat^\beta )$
\STATE A map $\sigma : [n] \to [m]$ such that $g_i(a_1x_{\sigma(1)},\ldots , a_nx_{\sigma(n)})$ is a monomial with coefficient for each $g_i$ and $\mathbf{a} \in \rat^n$.\\

\ENSURE[Probably] The polynomial $h(y_1,\ldots , y_\beta )$
\STATE Separate $M$ by the degree of $\left( \mathbf{g}(\mathbf{x_\sigma} )\right)^{\mathbf{d}}$ into $M_1,\ldots, M_p$
\STATE $l_p := \max \card M_i$
\FOR{ $j$ from 1 to $l_p + l'$}
\STATE Take a random vector $\mathbf{a}_j \in \rat^n$
\STATE Compute $H(\mathbf{a}_j\mathbf{x_\sigma})$
\STATE Compute $g_i(\mathbf{a}_j )$'s and $\mathbf{g}(\mathbf{a}_j)^{\mathbf{d}}$
\ENDFOR
\FOR{ $k$ from 1 to $p$}
\STATE Let $H_{j,k}$ be the coefficient of the term of degree $\mathbf{g}(\mathbf{x}_\sigma)^{\mathbf{d}}$ of $H(\mathbf{a}_j\mathbf{x}_\sigma )$ for $\mathbf{d}$ in $M_k$
\STATE Solve the system of linear equations $H_{j,k} = \sum_{\mathbf{d} \in M_k} c_\mathbf{d} \mathbf{g}(\mathbf{a}_j)^{\mathbf{d}}\ (j = 1,\ldots , l_p+l')$ for $c_{\mathbf{d}}$'s.
\ENDFOR
\STATE $h(\mathbf{y}) = \sum_{\mathbf{d} \in M}c_{\mathbf{d}}\mathbf{y}^{\mathbf{d}}$.
\end{algorithmic}
\end{algorithm}
In our case, we only use addition and multiplication of the polynomials degree less than $H$ to compute $H$, so the numbers of terms appears in the computation are $O(p)$, thus it costs $O(p^2 t_H)$ to compute $H(\mathbf{a}_j\mathbf{x_\sigma})$ par once. Therefore, the computational complexity of we have $O(l_p p^2 t_H + l_p^cp) = O(N(pt_H +l_p^{c-1}))$. In our case $l_p = 70$ and the constants are sufficiently small. Moreover, we set $l' = 5$ in the computation.

\subsection{Data for Subsection \ref{ss_dc}}
Explicit formula of $\delta_2$ and $\delta_3$ are
\begin{align*}
 \delta_2 = & \frac{-1}{2^{31} \cdot 3}(306d^5i - 2072d^4i^2 - 20544d^3i^3 + 300800d^2i^4 - 1691136di^5 \\
& + 3483648i^6 + 96192d^3j^2 - 1286400d^2ij^2 + 10146816di^2j^2 - 16920576i^3j^2 \\
& + 1008d^3ja + 42432d^2ija + 1430784di^2ja - 4451328i^3ja + 459d^3a^2 \\
& + 14316d^2ia^2 - 6768di^2a^2 - 775104i^3a^2 - 1836d^4b + 6624d^3ib \\
& + 173568d^2i^2b - 1850880di^3b + 4672512i^4b - 23887872j^4 - 10616832j^3a \\
& - 156672j^2a^2 + 244224ja^3 + 26136a^4 - 589824dj^2b + 9289728ij^2b \\
& - 672768djab + 4202496ijab - 111744da^2b + 39168ia^2b + 28800d^2b^2 \\
& - 460800dib^2 + 1382400i^2b^2 - 2208d^3c + 103296d^2ic - 1027584di^2c \\
& + 2598912i^3c + 9289728j^2c + 3280896jac - 76032a^2c - 230400dbc + 921600ibc),
\end{align*}
\begin{align*}
\delta_3 = & \frac{1}{2^{31} \cdot 3^3}(1458d^5i + 2904d^4i^2 - 43072d^3i^3 + 2453760d^2i^4 - 11570688di^5 \\
& + 40310784i^6 - 358464d^3j^2 - 10056960d^2ij^2 + 69424128di^2j^2 - 259780608i^3j^2 \\
& - 1296d^3ja - 730944d^2ija + 12379392di^2ja - 58973184i^3ja + 2187d^3a^2 \\
& + 100188d^2ia^2 - 730224di^2a^2 - 1881792i^3a^2 - 8748d^4b - 95328d^3ib \\
& + 2674944d^2i^2b - 20113920di^3b + 82861056i^4b + 107495424j^4 + 17915904j^3a \\
& - 3856896j^2a^2 - 2521728ja^3 + 143748a^4 + 18413568dj^2b - 161243136ij^2b \\
& + 8280576djab - 21399552ijab - 693792da^2b + 5664384ia^2b + 475200d^2b^2 \\
& - 12787200dib^2 + 43545600i^2b^2 + 7776d^3c + 1173888d^2ic - 7921152di^2c \\
& + 49268736i^3c - 6912000b^3 - 71663616j^2c - 3981312jac + 1672704a^2c \\
& - 5529600dbc + 49766400ibc).
\end{align*}

The relation among $\sigma_i, \phi,\psi$ has 1261 terms. The data is attached, or at \cite{gotou2023prog}.

\bibliographystyle{amsalpha}
\bibliography{refs}
\end{document}